\newtheorem{theorem}{Theorem}[section]
\newtheorem{proposition}{Proposition}[section]
\theoremstyle{definition}
\theoremstyle{remark}
\newtheorem{remark}[theorem]{Remark}
\theoremstyle{corollary}
\numberwithin{equation}{section}
\keywords{Singular system , $(p_{1},p_{2})$-Laplacian , Gradient term, Regularity, Schauder's fixed point}
\begin{document}
\title[Singular quasilinear elliptic problems]{Sub-supersolutions method for
singular quasilinear systems involving gradient terms}
\author[A. Moussaoui]{Abdelkrim Moussaoui}
\address{Applied Mathematics Laboratory, Faculty of Exact Sciences \\
and \\
Biology Department, Faculty of Natural and Life Sciences \\
A. Mira Bejaia University, Targa Ouzemour, 06000 Bejaia, Algeria}
\email{abdelkrim.moussaoui@univ-bejaia.dz}
\subjclass[2020]{35J75; 35J48; 35J92}
\keywords{$p$-laplacian, singular systems, gradient estimate, regularity,
convection, absorption, fixed point.}

\begin{abstract}
Existence, regularity and location of solutions to quasilinear singular
elliptic systems with general gradient dependence are established developing
a method of sub-supersolution. The abstract theorems involving
sub-supersolutions are applied to prove the existence of positive solutions
for convective and absorption singular systems.
\end{abstract}

\maketitle

\section{Introduction}

We deal with the following quasilinear elliptic system 
\begin{equation*}
(\mathrm{P})\qquad \left\{ 
\begin{array}{l}
-\Delta _{p_{1}}u_{1}=f_{1}(x,u_{1},u_{2},\nabla u_{1},\nabla u_{2})\text{
in }\Omega \\ 
-\Delta _{p_{2}}u_{2}=f_{2}(x,u_{1},u_{2},\nabla u_{2},\nabla u_{2})\text{
in }\Omega \\ 
u_{1},u_{2}>0\text{ in }\Omega \\ 
u_{1},u_{2}=0\text{ \ \ \ on }\partial \Omega%
\end{array}%
\right.
\end{equation*}%
where $\Omega \subset 
\mathbb{R}
^{N}$ $\left( N\geq 2\right) $ is a bounded domain with a smooth boundary $%
\partial \Omega $ and $\Delta _{p_{i}}$ denote the $p_{i}$-Laplacian
differential operator on $W_{0}^{1,p_{i}}(\Omega )$ with $1<p_{i}<N,$
defined by: $\Delta _{p_{i}}w_{i}=div(|\nabla w_{i}|^{p_{i}-2}\nabla w_{i})$
for $w_{i}\in W_{0}^{1,p_{i}}(\Omega )$, $i=1,2.$

A solution of $(\mathrm{P})$ is understood in the weak sense, that is a pair 
$(u_{1},u_{2})\in W_{0}^{1,p_{1}}(\Omega )\times W_{0}^{1,p_{2}}(\Omega )$
satisfying 
\begin{equation}
\int_{\Omega }|\nabla u_{i}|^{p_{i}-2}\nabla u_{i}\nabla \varphi _{i}\text{ }%
\mathrm{d}x=\int_{\Omega }f_{i}(x,u_{1},u_{2},\nabla u_{1},\nabla
u_{2})\varphi _{i}\text{ }\mathrm{d}x,  \label{10}
\end{equation}%
for all $\varphi _{i}\in W_{0}^{1,p_{i}}(\Omega )$. The nonlinear terms $%
f_{i}:\Omega \times \mathbb{R}^{2}\times 
\mathbb{R}
^{2N}\rightarrow 
\mathbb{R}
$ are Carath\'{e}odory functions, that is, $f_{i}(\cdot ,s_{1},s_{2},\xi
_{1},\xi _{2})$ is Lebesgue measurable for every $(s_{1},s_{2},\xi _{1},\xi
_{2})\in \mathbb{R}^{2}\times \mathbb{R}^{2N}$ and $f_{i}(x,\cdot ,\cdot
,\cdot ,\cdot )$ is continuous for a.e. $x\in \Omega ,$ $i=1,2.$

Problem $\left( \mathrm{P}\right) $ incorporates nonlinear terms $f_{1}$ and 
$f_{2}$ where the solution, its gradient, and spatial variables are
involved. This leads to a complex interplay of phenomena, mostly produced by
processes of reaction, absorption, and convection. These phenomena, which
could be deeply intertwined in the context of $\left( \mathrm{P}\right) $,
show interesting features. They are fundamental components in mathematical
models describing a wide range of nonlinear processes in engineering and
natural systems. From biological heat transfer and geological
thermoconvective motions to chemical reactions and global climate energy
balance \cite{K, DT}, problem $\left( \mathrm{P}\right) $ manifests itself
in numerous contexts. Furthermore, the convective-absorption problem can be
related to nonlinear Fokker-Planck equations and generalized viscous Burgers
equations, extending its relevance to plasma physics, astrophysics,
hydrodynamics, and neurophysics \cite{F}.

Convective and absorption problems have attracted significant interest,
resulting in a substantial number of research papers. A significant part is
carried on scalar case. Due to the sheer volume of publications, it is
impractical to quote them all. Among these, we refer to \cite{ACF, AM, CLMous,
DM, ElMiri, GMM, MMZ, MS} and there references. These problems present a
significant challenge due to the presence of gradient terms, effectively
precluding the application of standard variational methods. The absence of a
variational structure renders these problems unsuitable for techniques that
rely on minimizing functionals. Furthermore, even topological methods, such
as the sub-supersolution approach (see \cite{CMo, MVV}) and fixed-point
theorems (see \cite{CLMous, DM, MMZ}), struggle to provide solutions
directly. Their implementation requires controlling both the solution itself
and its gradient, a condition often difficult to satisfy. This difficulty is
particularly acute in the context of quasilinear systems $(\mathrm{P})$,
where constructing sub-supersolutions and establishing comparison properties
becomes significantly more complex. The inherent structural hurdle lies in
the inability to derive a priori relationships between the gradients of
comparable functions. This is an important fact when dealing with systems
involving gradient terms, which makes the application of topological methods
particularly difficult. This issue is addressed in \cite{CMo, MVV}, where
theorems involving sub-supersolutions have been proved using the theory of
pseudomonotone operators. In fact, even though results providing localized
solutions between sub- and supersolutions have been shown, their
applicability remains narrowly constrained. This limitation stems from the
reliance on conditions that effectively neutralize or minimize the impact of
gradients, as exemplified by assumptions (H2), (H3) in \cite{CMo} and
(H1)-(H3) in \cite{MVV}. Thence, the construction of sub-supersolutions for
comprehensive range of systems incorporating gradient terms remains an open
question.

Another important feature in the study of problem $(\mathrm{P})$ is that the
nonlinearities may exhibit singularities as the variables $u_{1}$ and $u_{2}$
approach zero. Their presence has a significant impact on both the structure
of $(\mathrm{P})$ and the properties of its solutions, which is an important
issue to be addressed in the present work. Actually, singularities are
involved in a wide range of important elliptic problems which have been
studied extensively in recent years, see for instance \cite{AM1, AMT, DM,
DM1, DM2, KM, MMM, MM3, MM2, MM1}.

Our main purpose is to establish the existence and regularity of solutions
for quasilinear system $(\mathrm{P})$ under appropriate growth conditions.
Our approach relies to sub-supersolutions technique we develop in section %
\ref{S2} for systems $(\mathrm{P})$ involving gradient terms and
singularities. For this purpose, taking advantage of Young and Hardy-Sobolev
inequalities on one hand, \cite[Theorem 2.1]{DM} on the other, we provide an
a priori estimate on the gradient of solutions in $L^{p_{1}}(\Omega )\times
L^{p_{2}}(\Omega )$ and $L^{\infty }(\Omega )\times L^{\infty }(\Omega )$,
respectively. This established a control on solutions and their gradients,
and thus made it possible to apply Schauder's fixed point theorem. Thus, a
weak solution for $(\mathrm{P})$ is obtained and is located in the rectangle
formed by the sub-supersolutions and acquires their sign and boundedness
properties. This result is stated in Theorem \ref{T1}. Another significant
feature of our existence result concerns the regularity part. At this point,
strengthening assumption on the vector field $(f_{1},f_{2})$ to be specified
later, Theorem \ref{T2} shows that solutions of singular systems $(\mathrm{P}%
)$ are smooth. It is worth pointing out that our argument is substantially
different from that developed in \cite{CMo} and \cite{MVV} where theory of
pseudomonotone operators is relevant. Moreover, it is noteworthy that our
investigation, if only in two ways, enhances that previously conducted in 
\cite{CMo} and \cite{MVV}. Firstly, the hypotheses under consideration
encompass those delineated in the aforementioned papers. Secondly, the
results of the current study enable the identification of smooth solutions
and extend their applicability to singular problems. Additionally, we should
note that our results can be extended straightforwardly to systems of more
than two equations. For the sake of simplicity, we restricted our
consideration to system $(\mathrm{P})$.

The applicability of the developed sub-supersolutions technique is provided
in Section \ref{S3} where systems involving convection and absorption terms
are considered. Relevant sub-supersolution pair for each system is
constructed through suitable functions, with an adjustment of adequate
constants, and a priori estimate on their gradient. The resulting
sub-supersolutions produce a rectangle that localizes a solution of the
convective/absorption systems whose existence and properties is provided by
Theorems \ref{T1} and \ref{T2}.

\section{Sub-supersolutions theorems}

\label{S2}

In the sequel, the Banach spaces $W^{1,p_{i}}(\Omega )$ and $%
L^{p_{i}}(\Omega )$ are equipped with the usual norms $\Vert \cdot \Vert
_{1,p_{i}}$ and $\Vert \cdot \Vert _{p_{i}}$, respectively, whereas the
space $W_{0}^{1,p}(\Omega )$ is endowed with the equivalent norm $\Vert
\nabla u\Vert _{p}=(\int_{\Omega }|\nabla u|^{p}\,\mathrm{d}x)^{\frac{1}{p}}$%
. We denote by $p_{i}^{\ast }=\frac{Np_{i}}{N-p_{i}}$ and $p_{i}^{\prime }=%
\frac{p_{i}}{p_{i}-1}.$ The continuity of embedding $W_{0}^{1,p_{i}}(\Omega
) $ in $L^{r_{i}}(\Omega ),$ for $1\leq r_{i}\leq p_{i}^{\ast },$ guarantees
the existence of a constant $c_{r}>0$ such that 
\begin{equation}
\Vert u\Vert _{r_{i}}\leq c_{r_{i}}\left\Vert u\right\Vert _{1,p_{i}}\ \text{
for all}\ u\in W_{0}^{1,p_{i}}(\Omega ).  \label{embed}
\end{equation}%
We also utilize the Banach space $\mathcal{C}^{1}(\overline{\Omega })$ and $%
\mathcal{C}_{0}^{1,\sigma }(\overline{\Omega })$ such that for $\sigma \in
(0,1)$ we have%
\begin{equation*}
\mathcal{C}_{0}^{1,\sigma }(\overline{\Omega })=\left\{ u\in \mathcal{C}%
^{1,\sigma }(\overline{\Omega }):u=0\text{ \ on }\partial \Omega \right\} .
\end{equation*}%
We recall that a sub-supersolution for $(\mathrm{P})$ consists of two pairs $%
(\underline{u}_{1}$,$\underline{u}_{2}),(\overline{u}_{1},\overline{u}%
_{2})\in (W_{0}^{1,p_{1}}(\Omega )\cap L^{\infty }(\Omega ))\times
(W_{0}^{1,p_{2}}(\Omega )\cap L^{\infty }(\Omega ))$ such that there hold $%
\overline{u}_{i}\geq \underline{u}_{i}$ in $\Omega $, and%
\begin{equation}
\begin{array}{l}
\int_{\Omega }\left\vert \nabla \underline{u}_{1}\right\vert
^{p_{1}-2}\nabla \underline{u}_{1}\nabla \varphi _{1}\ \mathrm{d}%
x-\int_{\Omega }f_{1}(x,\underline{u}_{1},w_{2},\nabla \underline{u}%
_{1},\nabla w_{2})\varphi _{1}\ \mathrm{d}x\leq 0, \\ 
\int_{\Omega }\left\vert \nabla \underline{u}_{2}\right\vert
^{p_{2}-2}\nabla \underline{u}_{2}\nabla \varphi _{2}\ \mathrm{d}%
x-\int_{\Omega }f_{2}(x,w_{1},\underline{u}_{2},\nabla w_{1},\nabla 
\underline{u}_{2})\varphi _{2}\ \mathrm{d}x\leq 0,%
\end{array}
\label{sub}
\end{equation}%
\begin{equation}
\begin{array}{l}
\int_{\Omega }\left\vert \nabla \overline{u}_{1}\right\vert ^{p_{1}-2}\nabla 
\overline{u}_{1}\nabla \varphi _{1}\ \mathrm{d}x-\int_{\Omega }f_{1}(x,%
\overline{u}_{1},w_{2},\nabla \overline{u}_{1},\nabla w_{2})\varphi _{1}\ 
\mathrm{d}x\geq 0, \\ 
\int_{\Omega }\left\vert \nabla \overline{u}_{2}\right\vert ^{p_{2}-2}\nabla 
\overline{u}_{2}\nabla \varphi _{2}\ \mathrm{d}x-\int_{\Omega }f_{2}(x,w_{1},%
\overline{u}_{2},\nabla w_{1},\nabla \overline{u}_{2})\varphi _{2}\ \mathrm{d%
}x\geq 0,%
\end{array}
\label{sup}
\end{equation}%
for all $\varphi _{i}\in W_{0}^{1,p_{i}}\left( \Omega \right) $ with $%
\varphi _{i}\geq 0$ a.e. in $\Omega $ and for all $w_{i}\in
W_{0}^{1,p_{i}}\left( \Omega \right) $ satisfying $w_{i}\in \lbrack 
\underline{u}_{i},\overline{u}_{i}]$ a.e. in $\Omega $, for $i=1,2$.

For an arbitrary constant $\Lambda >0$, we introduce the bounded convex sets 
\begin{equation}
\mathcal{K}_{\Lambda }^{p}=\left\{ (y_{1},y_{2})\in W_{0}^{1,p_{1}}(\Omega
)\times W_{0}^{1,p_{2}}(\Omega ):\Vert \nabla y_{1}\Vert _{p_{1}},\Vert
\nabla y_{2}\Vert _{p_{2}}<\Lambda \right\} ,  \label{18}
\end{equation}%
\begin{equation}
\mathcal{K}_{\Lambda }^{\infty }=\left\{ (y_{1},y_{2})\in \mathcal{C}^{1}(%
\overline{\Omega })\times \mathcal{C}^{1}(\overline{\Omega }):\Vert \nabla
y_{1}\Vert _{\infty },\Vert \nabla y_{2}\Vert _{\infty }<\Lambda \right\} .
\label{18*}
\end{equation}%
Corresponding to sub-supersolutions pairs $(\underline{u}_{1},\underline{u}%
_{2}),(\overline{u}_{1},\overline{u}_{2})$ and for $(z_{1},z_{2})\in 
\mathcal{K}_{\Lambda }^{p}$ (resp. $(z_{1},z_{2})\in \mathcal{K}_{\Lambda
}^{\infty }$), we introduce the auxiliary system%
\begin{equation*}
(\mathrm{P}_{z})\qquad \left\{ 
\begin{array}{l}
-\Delta _{p_{i}}u_{i}=\tilde{f}_{i}(x,z_{1},z_{2},\nabla z_{1},\nabla z_{2})%
\text{ in }\Omega \\ 
u_{i}=0\text{ \ \ \ on }\partial \Omega ,i=1,2,%
\end{array}%
\right.
\end{equation*}%
where 
\begin{equation}
\tilde{f}_{i}(x,z_{1},z_{2},\nabla z_{1},\nabla z_{2})=f_{i}(x,\tilde{z}_{1},%
\tilde{z}_{2},\nabla \tilde{z}_{1},\nabla \tilde{z}_{2}),  \label{25}
\end{equation}%
\begin{equation}
\tilde{z}_{i}=\min (\max (z_{i},\underline{u}_{i}),\overline{u}_{i})\text{, }%
i=1,2,  \label{26}
\end{equation}%
to which we associate the solution operators $\mathcal{T}_{p}$ and $\mathcal{%
T}_{\infty },$ respectively, defined as follows:%
\begin{equation*}
\begin{array}{cccc}
\mathcal{T}_{p}: & \mathcal{K}_{\Lambda }^{p} & \rightarrow & 
W_{0}^{1,p_{1}}(\Omega )\times W_{0}^{1,p_{2}}(\Omega ) \\ 
& (z_{1},z_{2}) & \mapsto & (u_{1},u_{2}),%
\end{array}%
\end{equation*}%
\begin{equation*}
\begin{array}{cccc}
\mathcal{T}_{\infty }: & \mathcal{K}_{\Lambda }^{\infty } & \rightarrow & 
\mathcal{C}_{0}^{1}(\overline{\Omega })\times \mathcal{C}_{0}^{1}(\overline{%
\Omega }) \\ 
& (z_{1},z_{2}) & \mapsto & (u_{1},u_{2}).%
\end{array}%
\end{equation*}%
It should be noted that, subsequent to the implementation of specific growth
conditions on nonlinearities $\tilde{f}_{1}$ and $\tilde{f}_{2}$, it will be
shown that the operators $\mathcal{T}_{p}$ and $\mathcal{T}_{\infty }$ are
well defined.

We assume the following assumption

\begin{description}
\item[$(\mathrm{H})$] There exist constants $M_{i}>0,$ $\mu _{i}\in 
\mathbb{R}
$ and $0\leq \gamma _{i},\theta _{i}<p_{i}-1$ such that%
\begin{equation*}
\left\vert f_{i}(x,s_{1},s_{2},\xi _{1},\xi _{2})\right\vert \leq
M_{i}(d(x)^{\mu _{i}}+\left\vert \xi _{1}\right\vert ^{\gamma
_{i}}+\left\vert \xi _{2}\right\vert ^{\theta _{i}}),
\end{equation*}%
for a.e. $x\in \Omega ,$ for all $s_{i}\in \left[ \underline{u}_{i},%
\overline{u}_{i}\right] ,$ and all $\xi _{i}\in 
\mathbb{R}
^{N},$ for $i=1,2.$
\end{description}

The existence result for problem $(\mathrm{P})$ is formulated as follows.

\begin{theorem}
\label{T1}Let $(\underline{u}_{1}$,$\underline{u}_{2}),(\overline{u}_{1},%
\overline{u}_{2})$ be sub-supersolution pairs of $(\mathrm{P})$ and assume $(%
\mathrm{H})$ fulfilled with%
\begin{equation}
\gamma _{1}\leq \frac{p_{1}}{(p_{1}^{\ast })^{\prime }},\text{ }\theta
_{2}\leq \frac{p_{2}}{(p_{2}^{\ast })^{\prime }},\text{ }\gamma _{2}\leq 
\frac{p_{1}}{p_{2}^{\prime }},\text{ \ }\theta _{1}\leq \frac{p_{2}}{%
p_{1}^{\prime }}  \label{5*}
\end{equation}%
and%
\begin{equation}
\mu _{i}>-1+\frac{1}{p_{i}}.  \label{6*}
\end{equation}%
Then, system $(\mathrm{P})$ has a solution $(u_{1},u_{2})\in
W_{0}^{1,p_{1}}(\Omega )\times W_{0}^{1,p_{2}}(\Omega )$ within $\left[ 
\underline{u}_{1},\overline{u}_{1}\right] \times \left[ \underline{u}_{2},%
\overline{u}_{2}\right] .$
\end{theorem}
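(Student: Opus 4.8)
The plan is to realize a solution of $(\mathrm{P})$ as a fixed point of the operator $\mathcal{T}_{p}$ by Schauder's theorem on the convex set $\mathcal{K}_{\Lambda}^{p}$ from \eqref{18}, and then to confine that fixed point to the rectangle $[\underline{u}_{1},\overline{u}_{1}]\times[\underline{u}_{2},\overline{u}_{2}]$ by a weak comparison argument. First I would verify that $\mathcal{T}_{p}$ is well defined. For fixed $(z_{1},z_{2})\in\mathcal{K}_{\Lambda}^{p}$ the right-hand side in \eqref{25}--\eqref{26} is a fixed function, so $(\mathrm{P}_{z})$ decouples into two problems $-\Delta_{p_{i}}u_{i}=h_{i}$ with $h_{i}$ independent of $u_{i}$. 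Using $(\mathrm{H})$, the bound $\gamma_{i},\theta_{i}<p_{i}-1$, the embedding \eqref{embed}, and the fact that \eqref{6*} forces $d(\cdot)^{\mu_{i}}\in L^{p_{i}'}(\Omega)$, each $h_{i}$ lies in a subspace of $W^{-1,p_{i}'}(\Omega)$. Since $-\Delta_{p_{i}}\colon W_{0}^{1,p_{i}}(\Omega)\to W^{-1,p_{i}'}(\Omega)$ is bounded, strictly monotone and coercive, Minty--Browder yields a unique $u_{i}$, so $\mathcal{T}_{p}$ is well defined; the companion $L^{\infty}$-gradient bound of \cite[Theorem 2.1]{DM} is what one invokes for the $\mathcal{T}_{\infty}$/regularity counterpart.

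Next I would prove the self-mapping property $\mathcal{T}_{p}(\overline{\mathcal{K}_{\Lambda}^{p}})\subset\overline{\mathcal{K}_{\Lambda}^{p}}$ via an a priori estimate. Testing $-\Delta_{p_{i}}u_{i}=\tilde{f}_{i}$ with $\varphi_{i}=u_{i}$ gives
\[
\Vert\nabla u_{i}\Vert_{p_{i}}^{p_{i}}\leq M_{i}\int_{\Omega}\bigl(d(x)^{\mu_{i}}+|\nabla\tilde{z}_{1}|^{\gamma_{i}}+|\nabla\tilde{z}_{2}|^{\theta_{i}}\bigr)|u_{i}|\,\mathrm{d}x.
\]
The singular term is handled by H\"older and \eqref{6*} as $\leq C\Vert\nabla u_{i}\Vert_{p_{i}}$. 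The diagonal gradient term (exponent $\gamma_{1}$ for $i=1$, $\theta_{2}$ for $i=2$) is estimated by H\"older with exponents $(p_{i}^{\ast})',p_{i}^{\ast}$ and Sobolev, using the diagonal bounds in \eqref{5*} (so that $\gamma_{1}(p_{1}^{\ast})'\leq p_{1}$, etc.), as $\leq C\Lambda^{\gamma_{i}}\Vert\nabla u_{i}\Vert_{p_{i}}$; the cross term (exponent $\theta_{1}$ for $i=1$, $\gamma_{2}$ for $i=2$) by H\"older with exponents $p_{i}',p_{i}$, using the cross bounds in \eqref{5*} (so that $\theta_{1}p_{1}'\leq p_{2}$ and $\gamma_{2}p_{2}'\leq p_{1}$), as $\leq C\Lambda^{\theta_{i}}\Vert\nabla u_{i}\Vert_{p_{i}}$ (up to additive constants $C_{0}$ from $\nabla\underline{u}_{j},\nabla\overline{u}_{j}$ in $\Vert\nabla\tilde{z}_{j}\Vert_{p_{j}}\leq\Lambda+C_{0}$). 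Dividing yields $\Vert\nabla u_{i}\Vert_{p_{i}}^{p_{i}-1}\leq C(1+\Lambda^{\gamma_{i}}+\Lambda^{\theta_{i}})$, and since $\gamma_{i},\theta_{i}<p_{i}-1$ the right-hand side is $o(\Lambda^{p_{i}-1})$, so for $\Lambda$ large enough $\Vert\nabla u_{i}\Vert_{p_{i}}\leq\Lambda$.

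I would then establish continuity and compactness by writing $\mathcal{T}_{p}=S\circ N$, where $N$ is the truncated Nemytskii map $(z_{1},z_{2})\mapsto(\tilde{f}_{1},\tilde{f}_{2})$ and $S$ is the solution operator above. Continuity of $S$ is Minty--Browder, and continuity of $N$ into $L^{\sigma_{1}}\times L^{\sigma_{2}}$ follows from the Carath\'eodory property and $(\mathrm{H})$ by a dominated-convergence argument. The decisive point is that $\gamma_{i},\theta_{i}<p_{i}-1$ forces the diagonal exponent to be strictly below $p_{i}/(p_{i}^{\ast})'$, so the diagonal term lies in some $L^{r}$ with $r>(p_{i}^{\ast})'$, while \eqref{5*} places the cross and singular terms in $L^{p_{i}'}$; hence one may fix $\sigma_{i}>(p_{i}^{\ast})'$ with $N$ bounded into $L^{\sigma_{i}}$. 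As $\sigma_{i}'<p_{i}^{\ast}$, Rellich--Kondrachov makes $W_{0}^{1,p_{i}}(\Omega)\hookrightarrow L^{\sigma_{i}'}(\Omega)$ compact, so the adjoint embedding $L^{\sigma_{i}}(\Omega)\hookrightarrow W^{-1,p_{i}'}(\Omega)$ is compact; composing with the continuous $S$ shows that $\mathcal{T}_{p}$ is compact. Schauder's theorem then produces a fixed point $(u_{1},u_{2})\in\overline{\mathcal{K}_{\Lambda}^{p}}$.

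Finally I would confine the fixed point to the rectangle. It solves $-\Delta_{p_{i}}u_{i}=f_{i}(x,\tilde{u}_{1},\tilde{u}_{2},\nabla\tilde{u}_{1},\nabla\tilde{u}_{2})$. Subtracting this from the subsolution inequality \eqref{sub}, taken with the admissible choice $w_{3-i}=\tilde{u}_{3-i}\in[\underline{u}_{3-i},\overline{u}_{3-i}]$, and testing against $\varphi_{i}=(\underline{u}_{i}-u_{i})^{+}\geq0$, I observe that on $\{u_{i}<\underline{u}_{i}\}$ the truncation \eqref{26} gives $\tilde{u}_{i}=\underline{u}_{i}$ and $\nabla\tilde{u}_{i}=\nabla\underline{u}_{i}$, so the two nonlinearities coincide and the right-hand side vanishes; strict monotonicity of $\xi\mapsto|\xi|^{p_{i}-2}\xi$ then forces $(\underline{u}_{i}-u_{i})^{+}=0$, i.e.\ $u_{i}\geq\underline{u}_{i}$. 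The symmetric argument with \eqref{sup} and $\varphi_{i}=(u_{i}-\overline{u}_{i})^{+}$ gives $u_{i}\leq\overline{u}_{i}$. Hence $\tilde{u}_{i}=u_{i}$, the fixed point solves $(\mathrm{P})$ in the sense of \eqref{10}, and it is located in $[\underline{u}_{1},\overline{u}_{1}]\times[\underline{u}_{2},\overline{u}_{2}]$.

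The main obstacle, and the core of the argument, is securing compactness of $\mathcal{T}_{p}$ despite the genuine gradient dependence of $f_{i}$: this is precisely where the strict bounds $\gamma_{i},\theta_{i}<p_{i}-1$ of $(\mathrm{H})$ and the cross-conditions \eqref{5*} enter, keeping the data strictly below the critical dual exponent. The comparison step is the second delicate point, since it must reconcile the truncation \eqref{26} with the weak sub/supersolution inequalities \eqref{sub}--\eqref{sup}.
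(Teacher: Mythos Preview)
Your proposal is correct and follows the same overall architecture as the paper: well-posedness of $\mathcal{T}_{p}$ via Minty--Browder, an energy estimate obtained by testing with $u_{i}$ to secure invariance of a large ball $\mathcal{K}_{\Lambda}^{p}$, Schauder's fixed point theorem, and finally the comparison with $(\underline{u}_{i}-u_{i})^{+}$ and $(u_{i}-\overline{u}_{i})^{+}$ exploiting that the truncation \eqref{26} freezes $\tilde{u}_{i}$ at $\underline{u}_{i}$ (resp.\ $\overline{u}_{i}$) on the relevant set. The confinement step is handled exactly as in the paper.

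Where you genuinely diverge is in the compactness/continuity step. The paper argues directly: given $(z_{1,n},z_{2,n})\to(z_{1},z_{2})$, it bounds $(u_{1,n},u_{2,n})$, extracts a weakly convergent subsequence, then shows $\langle -\Delta_{p_{i}}u_{i,n},u_{i,n}-u_{i}\rangle\to 0$ term by term (using \eqref{5*}, \eqref{6*} and the compact Sobolev embedding on the test function side) and invokes the $S_{+}$ property of $-\Delta_{p_{i}}$ to upgrade to strong convergence. Your route is more functional-analytic: you factor $\mathcal{T}_{p}=S\circ N$, observe that (H) with $\gamma_{i},\theta_{i}<p_{i}-1$ and \eqref{5*}--\eqref{6*} places $\tilde{f}_{i}$ in some $L^{\sigma_{i}}$ with $\sigma_{i}>(p_{i}^{\ast})'$ (in fact in $L^{p_{i}'}$), and then get compactness from the compact dual embedding $L^{\sigma_{i}}(\Omega)\hookrightarrow W^{-1,p_{i}'}(\Omega)$ composed with the continuous inverse $S=(-\Delta_{p_{i}})^{-1}$. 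Both arguments are standard and both hinge on the same strict subcriticality; yours is more modular and makes the source of compactness transparent, while the paper's $S_{+}$ argument is more hands-on and avoids the (easy but extra) duality step. A minor point: the paper also uses Young's inequality rather than H\"older in the a priori estimate and first phrases it as a bound on solutions of $(\mathrm{P})$ before redoing it for $\mathcal{T}_{p}$, whereas you go straight to the invariance bound---your way is slightly cleaner, and your explicit remark about the additive constant $C_{0}$ coming from $\nabla\underline{u}_{j},\nabla\overline{u}_{j}$ in $\Vert\nabla\tilde{z}_{j}\Vert_{p_{j}}$ is a detail the paper glosses over.
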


\begin{remark}
\label{R3}Assumption (\ref{5*}) guarantees that the continuous embedding $%
L^{p_{1}^{\ast }}(\Omega )\hookrightarrow L^{\frac{p_{1}}{p_{1}-\gamma _{1}}%
}(\Omega )$ holds true because%
\begin{equation*}
\frac{p_{1}-\gamma _{1}}{p_{1}}=1-\frac{\gamma _{1}}{p_{1}}>1-\frac{1}{%
(p_{1}^{\ast })^{\prime }}=\frac{1}{p_{1}^{\ast }},
\end{equation*}%
so $\frac{p_{1}}{p_{1}-\gamma _{1}}<p_{1}^{\ast }.$ Similarly, we get the
continuous embedding $L^{p_{2}^{\ast }}(\Omega )\hookrightarrow L^{\frac{%
p_{2}}{p_{2}-\theta _{2}}}(\Omega ).$
\end{remark}

\begin{proof}[Proof of Theorem \protect\ref{T1}]
First, we claim that there exists a constant $R>0$ such that all solutions $%
(u_{1},u_{2})\in W_{0}^{1,p_{1}}(\Omega )\times W_{0}^{1,p_{2}}(\Omega )$ of 
$(\mathrm{P})$ satisfy the estimate%
\begin{equation}
\left\Vert \nabla u_{1}\right\Vert _{p_{1}},\left\Vert \nabla
u_{2}\right\Vert _{p_{2}}<R.  \label{R}
\end{equation}%
Let $(u_{1},u_{2})\in W_{0}^{1,p_{1}}(\Omega )\times W_{0}^{1,p_{2}}(\Omega
) $ be a solution of $(\mathrm{P})$. Multiplying the first and the second
equation in $(\mathrm{P})$ by $u_{1}$ and $u_{2},$\ respectively,
integrating over $\Omega $ and using $(\mathrm{H})$, we get%
\begin{equation}
\begin{array}{l}
\int_{\Omega }|\nabla u_{i}|^{p_{i}}\text{ }\mathrm{d}x=\int_{\Omega
}f_{i}(x,u_{1},u_{2},\nabla u_{1},\nabla u_{2})u_{i}\text{ }\mathrm{d}x \\ 
\leq M_{i}\int_{\Omega }(d(x)^{\mu _{i}}+\left\vert \nabla u_{1}\right\vert
^{\gamma _{i}}+\left\vert \nabla u_{2}\right\vert ^{\theta _{i}})u_{i}\text{ 
}\mathrm{d}x.%
\end{array}
\label{43}
\end{equation}%
If $\mu _{i}\geq 0$, it readily seen that%
\begin{equation}
\int_{\Omega }d(x)^{\mu _{i}}u_{i}\text{ }\mathrm{d}x\leq c||d(x)||_{\infty
}^{\mu _{i}}\left\Vert \nabla u_{i}\right\Vert _{p_{i}},  \label{46*}
\end{equation}%
for a certain constant $c>0.$ For $\mu _{i}<0$ in (\ref{6*}), by
Hardy-Sobolev inequality (see, e.g., \cite[Lemma 2.3]{AC}), there exists a
positive constant $C_{i}$ such that%
\begin{equation}
\int_{\Omega }d(x)^{\mu _{i}}u_{i}\text{ }\mathrm{d}x\leq C_{i}\Vert \nabla
u_{i}\Vert _{p_{i}}\text{.}  \label{46}
\end{equation}%
Using (\ref{embed}), (\ref{5*}), Young inequality and Remark \ref{R3}, it
follows that%
\begin{equation}
\begin{array}{l}
\int_{\Omega }\left\vert \nabla u_{1}\right\vert ^{\gamma _{1}}u_{1}\text{ }%
\mathrm{d}x\leq \epsilon \int_{\Omega }|\nabla u_{1}|^{p_{1}}\text{ }\mathrm{%
d}x+c_{\epsilon }\int_{\Omega }|u_{1}|^{\frac{p_{1}}{p_{1}-\gamma _{1}}}%
\text{ }\mathrm{d}x \\ 
\leq \epsilon \left\Vert \nabla u_{1}\right\Vert
_{p_{1}}^{p_{1}}+c_{\epsilon }c_{0}\left\Vert \nabla u_{1}\right\Vert
_{p_{1}}^{\frac{p_{1}}{p_{1}-\gamma _{1}}},%
\end{array}%
\end{equation}%
\begin{equation}
\begin{array}{l}
\int_{\Omega }\left\vert \nabla u_{2}\right\vert ^{\theta _{1}}u_{1}\text{ }%
\mathrm{d}x\leq \epsilon \int_{\Omega }|u_{1}|^{p_{1}}\text{ }\mathrm{d}%
x+c_{\epsilon }\int_{\Omega }|\nabla u_{2}|^{\theta _{1}p_{1}^{\prime }}%
\text{ }\mathrm{d}x \\ 
\leq \epsilon c_{1}\left\Vert \nabla u_{1}\right\Vert
_{p_{1}}^{p_{1}}+c_{\epsilon }\left\Vert \nabla u_{2}\right\Vert
_{p_{2}}^{\theta _{1}p_{1}^{\prime }},%
\end{array}%
\end{equation}%
for constants $c_{0},c_{1}>0$, for every $\epsilon >0$ and for a certain
constant $c_{\epsilon }>0$ depending on $\epsilon $.

Repeating the previous argument leads to%
\begin{equation}
\begin{array}{l}
\int_{\Omega }\left\vert \nabla u_{2}\right\vert ^{\theta _{2}}u_{2}\text{ }%
\mathrm{d}x\leq \epsilon \int_{\Omega }|\nabla u_{2}|^{p_{2}}\text{ }\mathrm{%
d}x+c_{\epsilon }\int_{\Omega }|u_{2}|^{\frac{p_{2}}{p_{2}-\theta _{2}}}%
\text{ }\mathrm{d}x \\ 
\leq \epsilon \left\Vert \nabla u_{2}\right\Vert
_{p_{2}}^{p_{2}}+c_{\epsilon }^{\prime }c_{0}^{\prime }\left\Vert \nabla
u_{2}\right\Vert _{p_{2}}^{\frac{p_{2}}{p_{2}-\theta _{2}}},%
\end{array}%
\end{equation}%
\begin{equation}
\begin{array}{l}
\int_{\Omega }\left\vert \nabla u_{1}\right\vert ^{\gamma _{2}}u_{2}\text{ }%
\mathrm{d}x\leq \epsilon \int_{\Omega }|u_{2}|^{p_{2}}\text{ }\mathrm{d}%
x+c_{\epsilon }\int_{\Omega }|\nabla u_{1}|^{\gamma _{2}p_{2}^{\prime }}%
\text{ }\mathrm{d}x \\ 
\leq \epsilon c_{1}^{\prime }\left\Vert \nabla u_{2}\right\Vert
_{p_{2}}^{p_{2}}+c_{\epsilon }^{\prime }\left\Vert \nabla u_{1}\right\Vert
_{p_{1}}^{\gamma _{2}p_{2}^{\prime }},%
\end{array}
\label{44}
\end{equation}%
for $\epsilon >0$ and for constants $c_{\epsilon }^{\prime },c_{0}^{\prime
},c_{1}^{\prime }>0$.

Then, on account of\ (\ref{5*}) and the fact that $p_{i}>1,$ gathering (\ref%
{43})-(\ref{44}) together and taking $\varepsilon $ small, there is a
constant $R>0,$ independent of $(u_{1},u_{2})$, such that (\ref{R}) holds
true. This proves the claim.

On the basis of the a priori estimate $R>0$, we consider the set $\mathcal{K}%
_{R}^{p}$ defined in (\ref{18}). For $(z_{1},z_{2})\in \mathcal{K}_{R}^{p}$
and by $(\mathrm{H}),$ (\ref{5*}), using H\"{o}lder's inequality together
with (\ref{embed}) and Remark \ref{R3}, we get%
\begin{equation}
\begin{array}{l}
\int\limits_{\Omega }\tilde{f}_{i}(x,z_{1},z_{2},\nabla z_{1},\nabla z_{2})%
\text{ }\mathrm{d}x\leq \int\limits_{\Omega }M_{i}(d(x)^{\mu
_{i}}+\left\vert \nabla \tilde{z}_{1}\right\vert ^{\gamma _{i}}+\left\vert
\nabla \tilde{z}_{2}\right\vert ^{\theta _{i}})\text{ }\mathrm{d}x \\ 
\leq M_{i}(\int\limits_{\Omega }d(x)^{\mu _{i}}\text{ }\mathrm{d}x+|\Omega
|^{\frac{p_{1}-\gamma _{i}}{p_{1}}}\left\Vert \nabla \tilde{z}%
_{1}\right\Vert _{p_{1}}^{^{\gamma _{i}}}+|\Omega |^{\frac{p_{2}-\theta _{i}%
}{p_{2}}}\left\Vert \nabla \tilde{z}_{2}\right\Vert _{p_{2}}^{\theta _{i}})
\\ 
\leq M_{i}\int\limits_{\Omega }d(x)^{\mu _{i}}\text{ }\mathrm{d}%
x+c_{i}(R^{^{\gamma _{i}}}+R^{\theta _{i}}).%
\end{array}
\label{27}
\end{equation}%
with certain constant $c_{i}>0,i=1,2.$ If $\mu _{i}\geq 0$, it is readily
seen that $\int\limits_{\Omega }d(x)^{\mu _{i}}$ $\mathrm{d}x<\infty $
whereas, if $\mu _{i}<0$, the same conclusion follows on the basis of (\ref%
{6*}) and \cite[Lemma in page 726]{LM}. This shows that $\tilde{f}_{i}\in
W^{-1,p_{i}^{^{\prime }}}(\Omega ).$ Hence, the unique solvability of $%
(u_{1},u_{2})$ in $(\mathrm{P}_{z})$ is readily derived from Minty-Browder
Theorem (see, e.g., \cite[Theorem V.15]{Brezis}). This ensures that the
operator $\mathcal{T}_{p}:\mathcal{K}_{R}^{p}\rightarrow
W_{0}^{1,p_{1}}(\Omega )\times W_{0}^{1,p_{2}}(\Omega )$ is well defined.

We note from $(\mathrm{P}_{z})$ that any fixed point of $\mathcal{T}_{p}$
coincides with the weak solution of problem $(\mathrm{P}).$ Consequently, to
achieve the desired conclusion it suffices to prove that $\mathcal{T}_{p}$
has a fixed point. To this end we apply Schauder's fixed point theorem.

We show that $\mathcal{T}_{p}$ is continuous. Let $(z_{1,n},z_{2,n})%
\rightarrow (z_{1},z_{2})$ in $\mathcal{K}_{R}^{p},$ for all $n$. Denote $%
(u_{1,n},u_{2,n})=\mathcal{T}_{p}(z_{1,n},z_{2,n}),$ which reads as%
\begin{equation}
\int\limits_{\Omega }\left\vert \nabla u_{i,n}\right\vert ^{p_{i}-2}\nabla
u_{i,n}\nabla \varphi _{i}dx=\int\limits_{\Omega }\tilde{f}%
_{i}(x,z_{1,n},z_{2,n},\nabla z_{1,n},\nabla z_{2,n})\varphi _{i}\text{ }%
\mathrm{d}x,  \label{28}
\end{equation}%
for all $\varphi _{i}\in W_{0}^{1,p_{i}}(\Omega )$. Acting in (\ref{28})
with $\varphi _{i}=u_{i,n}$, using $(\mathrm{H}),$ we get%
\begin{equation}
\begin{array}{l}
\left\Vert \nabla u_{1,n}\right\Vert _{p_{1}}^{p_{1}}=\int\limits_{\Omega }%
\tilde{f}_{1}(x,z_{1,n},z_{2,n},\nabla z_{1,n},\nabla z_{2,n})u_{1,n}\text{ }%
\mathrm{d}x \\ 
\leq M_{1}\int\limits_{\Omega }(d(x)^{\mu _{1}}+\left\vert \nabla
z_{1,n}\right\vert ^{\gamma _{1}}+\left\vert \nabla z_{2,n}\right\vert
^{\theta _{1}})u_{1,n}\text{ }\mathrm{d}x%
\end{array}
\label{29}
\end{equation}%
and%
\begin{equation}
\begin{array}{l}
\left\Vert \nabla u_{2,n}\right\Vert _{p_{2}}^{p_{2}}=\int\limits_{\Omega }%
\tilde{f}_{2}(x,z_{1,n},z_{2,n},\nabla z_{1,n},\nabla z_{2,n})u_{2,n}\text{ }%
\mathrm{d}x \\ 
\leq M_{2}\int\limits_{\Omega }(d(x)^{\mu _{2}}+\left\vert \nabla
z_{1,n}\right\vert ^{\gamma _{2}}+\left\vert \nabla z_{2,n}\right\vert
^{\theta _{2}})u_{2,n}\text{ }\mathrm{d}x.%
\end{array}
\label{29*}
\end{equation}%
From (\ref{46*}) or (\ref{46}) depending on whether $\mu _{i}<0$ or $\mu
_{i}\geq 0,$ bearing in mind that $\frac{p_{1}}{p_{1}-\gamma _{1}}%
<p_{1}^{\ast }$ and $\theta _{1}p_{1}^{\prime }\leq p_{2}$, H\"{o}lder's
inequality together with (\ref{5*}) and the embedding (\ref{embed}) imply%
\begin{equation}
\begin{array}{l}
\int\limits_{\Omega }(d(x)^{\mu _{1}}+\left\vert \nabla z_{1,n}\right\vert
^{\gamma _{1}}+\left\vert \nabla z_{2,n}\right\vert ^{\theta _{1}})u_{1,n}%
\text{ }\mathrm{d}x \\ 
\leq C_{0}\left\Vert \nabla u_{1,n}\right\Vert _{p_{1}}+\left\Vert \nabla
z_{1,n}\right\Vert _{p_{1}}^{\gamma _{1}}\left\Vert u_{1,n}\right\Vert _{%
\frac{p_{1}}{p_{1}-\gamma _{1}}}+\left\Vert \nabla z_{2,n}\right\Vert
_{\theta _{1}p_{1}^{\prime }}^{\theta _{1}}\left\Vert u_{1,n}\right\Vert
_{p_{1}} \\ 
\leq C_{1}\left\Vert \nabla u_{1,n}\right\Vert _{p_{1}}(1+\left\Vert \nabla
z_{1,n}\right\Vert _{p_{1}}^{\gamma _{1}}+\left\Vert \nabla
z_{2,n}\right\Vert _{p_{2}}^{\theta _{1}}).%
\end{array}
\label{30}
\end{equation}%
Similarly, recalling that $\frac{p_{2}}{p_{2}-\theta _{2}}<p_{2}^{\ast }$
and $\gamma _{2}p_{2}^{\prime }\leq p_{1}$, we get%
\begin{equation}
\begin{array}{l}
\int\limits_{\Omega }(d(x)^{\mu _{2}}+\left\vert \nabla z_{1,n}\right\vert
^{\gamma _{2}}+\left\vert \nabla z_{2,n}\right\vert ^{\theta _{2}})u_{2,n}%
\text{ }\mathrm{d}x \\ 
\leq C_{0}^{\prime }\left\Vert \nabla u_{2,n}\right\Vert _{p_{2}}+\left\Vert
\nabla z_{1,n}\right\Vert _{\gamma _{2}p_{2}^{\prime }}^{\gamma
_{2}}\left\Vert u_{2,n}\right\Vert _{p_{2}}+\left\Vert \nabla
z_{2,n}\right\Vert _{p_{2}}^{\theta _{2}}\left\Vert u_{2,n}\right\Vert _{%
\frac{p_{2}}{p_{2}-\gamma _{2}}} \\ 
\leq C_{2}\left\Vert \nabla u_{2,n}\right\Vert _{p_{2}}(1+\left\Vert \nabla
z_{1,n}\right\Vert _{p_{1}}^{\gamma _{2}}+\left\Vert \nabla
z_{2,n}\right\Vert _{p_{2}}^{\theta _{2}}),%
\end{array}
\label{30*}
\end{equation}%
with constants $C_{1},C_{2}>0.$

Gathering (\ref{29})-(\ref{30*}) together, since $p_{i}>1$ and $%
(z_{1},z_{2})\in \mathcal{K}_{R}^{p},$ it follows that $\left\{
u_{i,n}\right\} $ is bounded in $W_{0}^{1,p_{i}}(\Omega )$ for $i=1,2$. So,
passing to relabeled subsequences, we can write the weak convergence%
\begin{equation}
u_{i,n}\rightharpoonup u_{i},\text{ for some }u_{i}\in
W_{0}^{1,p_{i}}(\Omega ).  \label{32}
\end{equation}%
Setting $\varphi _{i}=u_{i,n}-u_{i}$ in (\ref{28}) we find that%
\begin{equation*}
\int\limits_{\Omega }\left\vert \nabla u_{i,n}\right\vert ^{p_{i}-2}\nabla
u_{i,n}\nabla (u_{i,n}-u_{i})\text{ }\mathrm{d}x=\int\limits_{\Omega }\tilde{%
f}_{i}(x,z_{1,n},z_{2,n},\nabla z_{1,n},\nabla z_{2,n})(u_{i,n}-u_{i})\text{ 
}\mathrm{d}x.
\end{equation*}%
By $(\mathrm{H})$ we have%
\begin{equation}
\begin{array}{l}
\int\limits_{\Omega }\tilde{f}_{i}(x,z_{1,n},z_{2,n},\nabla z_{1,n},\nabla
z_{2,n})(u_{i,n}-u_{i})\text{ }\mathrm{d}x \\ 
\leq M_{i}\int\limits_{\Omega }(d(x)^{\mu _{i}}+\left\vert \nabla
z_{1,n}\right\vert ^{\gamma _{i}}+\left\vert \nabla z_{2,n}\right\vert
^{\theta _{i}})(u_{i,n}-u_{i})\text{ }\mathrm{d}x.%
\end{array}
\label{33}
\end{equation}%
We point out that (\ref{32}) together with (\ref{30}) and (\ref{30*}), where 
$u_{i,n}$ is replaced by $u_{i,n}-u_{i},$ enable us to derive that%
\begin{equation}
\int\limits_{\Omega }\left\vert \nabla z_{1,n}\right\vert ^{\gamma
_{1}}(u_{1,n}-u_{1})\text{ }\mathrm{d}x\leq \left\Vert \nabla
z_{1,n}\right\Vert _{p_{1}}^{\gamma _{1}}\left\Vert u_{1,n}-u_{1}\right\Vert
_{\frac{p_{1}}{p_{1}-\gamma _{1}}}\rightarrow 0,
\end{equation}%
\begin{equation}
\int\limits_{\Omega }\left\vert \nabla z_{2,n}\right\vert ^{\theta
_{1}}(u_{1,n}-u_{1})\text{ }\mathrm{d}x\leq \left\Vert \nabla
z_{2,n}\right\Vert _{\theta _{1}p_{1}^{\prime }}^{\theta _{1}}\left\Vert
u_{1,n}-u_{1}\right\Vert _{p_{1}}\rightarrow 0,
\end{equation}%
\begin{equation}
\int\limits_{\Omega }\left\vert \nabla z_{2,n}\right\vert ^{\theta
_{2}}(u_{2,n}-u_{2})\text{ }\mathrm{d}x\leq \left\Vert \nabla
z_{2,n}\right\Vert _{p_{2}}^{\theta _{2}}\left\Vert u_{2,n}-u_{2}\right\Vert
_{\frac{p_{2}}{p_{2}-\theta _{2}}}\rightarrow 0,
\end{equation}%
\begin{equation}
\int\limits_{\Omega }\left\vert \nabla z_{1,n}\right\vert ^{\gamma
_{2}}(u_{2,n}-u_{2})\text{ }\mathrm{d}x\leq \left\Vert \nabla
z_{1,n}\right\Vert _{\gamma _{2}p_{2}^{\prime }}^{\gamma _{2}}\left\Vert
u_{2,n}-u_{2}\right\Vert _{p_{2}}\rightarrow 0.
\end{equation}%
Moreover, for $\mu _{i}\geq 0$, by Holder's inequality and (\ref{32}), we
have%
\begin{equation}
\int\limits_{\Omega }d(x)^{\mu _{i}}(u_{i,n}-u_{i})\text{ }\mathrm{d}x\leq
||d(x)||_{\infty }^{\mu _{i}}\left\Vert u_{1,n}-u_{1}\right\Vert
_{1}\rightarrow 0,
\end{equation}%
while, if $\mu _{i}<0,$ by means of \cite[Lemma in page 726]{LM} together
with (\ref{6*}), we get%
\begin{equation}
\int\limits_{\Omega }d(x)^{\mu _{i}}(u_{i,n}-u_{i})\text{ }\mathrm{d}x\leq
||d(x)||_{\frac{\mu _{i}p_{i}}{p_{i}-1}}^{\mu _{i}}\Vert u_{i,n}-u_{i}\Vert
_{p_{i}}\rightarrow 0.  \label{33*}
\end{equation}%
Here, note that from (\ref{6*}) implies $\frac{\mu _{i}p_{i}}{p_{i}-1}>-1$
and therefore, by \cite[Lemma in page 726]{LM}, $d(x)^{\frac{\mu _{i}p_{i}}{%
p_{i}-1}}\in L^{1}(\Omega ).$

Combining (\ref{33})-(\ref{33*}) results in%
\begin{equation*}
\lim_{n\rightarrow +\infty }\int\limits_{\Omega }\tilde{f}%
_{i}(x,z_{1,n},z_{2,n},\nabla z_{1,n},\nabla z_{2,n})(u_{i,n}-u_{i})\text{ }%
\mathrm{d}x=0,\text{ for }i=1,2.
\end{equation*}%
Then, we see that $\lim_{n\longrightarrow \infty }\left\langle -\Delta
_{p_{i}}u_{i,n},u_{i,n}-u_{i}\right\rangle =0,$ $i=1,2.$ The $S_{+}$
-property of $-\Delta _{p_{i}}$ on $W_{0}^{1,p_{i}}(\Omega )$ along with (%
\ref{32}) implies%
\begin{equation}
u_{i,n}\rightarrow u_{i}\text{ in }W_{0}^{1,p_{i}}(\Omega )\text{ }i=1,2.
\label{34}
\end{equation}%
Thus, passing to the limit in (\ref{28}) leads to $(u_{1},u_{2})=\mathcal{T}%
_{p}(z_{1},z_{2})$. This proves the claim.

Following the same reasoning as before, by considering a bounded sequence $%
(u_{1,n},u_{2,n})$ in $W_{0}^{1,p_{1}}(\Omega )\times W_{0}^{1,p_{2}}(\Omega
)$ with $(u_{1,n},u_{2,n})=\mathcal{T}_{p}(z_{1,n},z_{2,n}),$ for $%
(z_{1,n},z_{2,n})\in \mathcal{K}_{R}^{p}$, we find $(u_{1},u_{2})\in
W_{0}^{1,p_{1}}(\Omega )\times W_{0}^{1,p_{2}}(\Omega )$ such that, along a
relabeled subsequence, $(u_{1,n},u_{2,n})\rightarrow (u_{1},u_{2})$ in $%
W_{0}^{1,p_{1}}(\Omega )\times W_{0}^{1,p_{2}}(\Omega )$, thereby the
relative compactness of $\mathcal{T}_{p}(\mathcal{K}_{R}^{p})$ is proven.

Now, by considering $(\mathrm{P}_{z})$ and arguing as in (\ref{29})-(\ref%
{30*}) with $(z_{1},z_{2})$ in place of $(z_{1,n},z_{2,n})$, due to $(%
\mathrm{H})$ and for $R>0$ sufficiently large, we get%
\begin{equation*}
\left\Vert \nabla u_{i}\right\Vert _{p_{i}}^{p_{i}-1}\leq c_{i}(1+\left\Vert
\nabla z_{1}\right\Vert _{p_{1}}^{\gamma _{i}}+\left\Vert \nabla
z_{2}\right\Vert _{p_{2}}^{\theta _{i}})\leq c_{i}(1+R^{\gamma
_{i}}+R^{\theta _{i}})\leq R^{p_{i}-1},
\end{equation*}%
showing that $\mathcal{K}_{R}^{p}$ is invariant under $\mathcal{T}_{p}$,
that is, $\mathcal{T}_{p}(\mathcal{K}_{R}^{p})\subset \mathcal{K}_{R}^{p}.$

We are thus in a position to apply Schauder's fixed point theorem to the map 
$\mathcal{T}_{p}:\mathcal{K}_{R}^{p}\rightarrow \mathcal{K}_{R}^{p}$ which
establishes the existence of $(u_{1},u_{2})\in \mathcal{K}_{R}^{p}$
satisfying $(u_{1},u_{2})=\mathcal{T}(u_{1},u_{2})$.

Let us justify that%
\begin{equation*}
\underline{u}_{1}\leq u_{1}\leq \overline{u}_{1}\text{ and }\underline{u}%
_{2}\leq u_{2}\leq \overline{u}_{2}\text{ in }\Omega .
\end{equation*}%
Set $\zeta _{1}=(\underline{u}_{1}-u_{1})^{+}$ and suppose $\zeta _{1}\neq 0$%
. Then, from $(\mathrm{P}_{z})$, (\ref{25}), (\ref{26}) and (\ref{sub}), we
infer that%
\begin{equation*}
\begin{array}{l}
\int_{\{\underline{u}_{1}>u_{1}\}}|\nabla u_{1}|^{p_{1}-2}\nabla u_{1}\nabla
\zeta _{1}\ \mathrm{d}x=\int_{\Omega }|\nabla u_{1}|^{p_{1}-2}\nabla
u_{1}\nabla \zeta _{1}\ \mathrm{d}x \\ 
=\int_{\{\underline{u}_{1}>u_{1}\}}\tilde{f}_{1}(x,u_{1},u_{2},\nabla
u_{1},\nabla u_{2})\zeta _{1}\ \mathrm{d}x=\int_{\{\underline{u}%
_{1}>u_{1}\}}f_{1}(x,\tilde{u}_{1},\tilde{u}_{2},\nabla \tilde{u}_{1},\nabla 
\tilde{u}_{2})\zeta _{1}\ \mathrm{d}x \\ 
=\int_{\{\underline{u}_{1}>u_{1}\}}f_{1}(x,\underline{u}_{1},\tilde{u}%
_{2},\nabla \underline{u}_{1},\nabla \tilde{u}_{2})\zeta _{1}\ \mathrm{d}%
x\geq \int_{\{\underline{u}_{1}>u_{1}\}}|\nabla \underline{u}%
_{1}|^{p_{1}-2}\nabla \underline{u}_{1}\nabla \zeta _{1}\ \mathrm{d}x.%
\end{array}%
\end{equation*}%
This implies that%
\begin{equation*}
\begin{array}{c}
\int_{\{\underline{u}_{1}>u_{1}\}}(|\nabla u_{1}|^{p_{1}-2}\nabla
u_{1}-|\nabla \underline{u}_{1}|^{p_{1}-2}\nabla \underline{u}_{1})\nabla
\zeta _{1}\ \mathrm{d}x\geq 0,%
\end{array}%
\end{equation*}%
a contradiction. Hence $u_{1}\geq \underline{u}_{1}$ in $\Omega $. A quite
similar argument provides that $u_{2}\geq \underline{u}_{2}$ in $\Omega $.
In the same way, we prove that $u_{1}\leq \overline{u}_{1}$ and $u_{2}\leq 
\overline{u}_{2}$ in $\Omega $. This completes the proof.
\end{proof}

Next we set forth a result addressing the regularity of solutions and a
priori estimates for problem $(\mathrm{P})$. Under more restrictive
assumption on the exponents in $(\mathrm{H}),$ regular solutions for $(%
\mathrm{P})$ are obtained.

\begin{theorem}
\label{T2}Let $(\underline{u}_{1}$,$\underline{u}_{2}),(\overline{u}_{1},%
\overline{u}_{2})$ be sub-supersolution pairs of $(\mathrm{P})$ and assume $(%
\mathrm{H})$ fulfilled with%
\begin{equation}
\mu _{i}>-\frac{1}{r_{i}}\text{ \ and \ }\max \{\gamma _{i},\theta _{i}\}<%
\frac{p_{i}-1}{r_{i}},\text{ with }r_{i}>N,\text{ for }i=1,2.  \label{5}
\end{equation}%
Then, system $(\mathrm{P})$ has a solution $(u_{1},u_{2})\in \mathcal{C}%
_{0}^{1,\sigma }(\overline{\Omega })\times \mathcal{C}_{0}^{1,\sigma }(%
\overline{\Omega }),$ for certain $\sigma \in (0,1),$ within $\left[ 
\underline{u}_{1},\overline{u}_{1}\right] \times \left[ \underline{u}_{2},%
\overline{u}_{2}\right] .$
\end{theorem}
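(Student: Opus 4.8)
The plan is to run the very same scheme as in the proof of Theorem~\ref{T1}, but transposed from the Sobolev framework $(\mathcal{K}_{R}^{p},\mathcal{T}_{p})$ to its H\"{o}lder counterpart $(\mathcal{K}_{\Lambda}^{\infty},\mathcal{T}_{\infty})$, the extra regularity being extracted from \cite[Theorem 2.1]{DM}. The first and decisive step is to upgrade the a priori gradient control from $L^{p_{i}}$ to $L^{\infty}$. Observe that (\ref{5}) is stronger than (\ref{5*})--(\ref{6*}), so Theorem~\ref{T1} applies and every solution $(u_{1},u_{2})$ of $(\mathrm{P})$ already satisfies $\Vert \nabla u_{i}\Vert _{p_{i}}<R$. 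Now $\mu _{i}>-\frac{1}{r_{i}}$ forces $\int_{\Omega }d(x)^{\mu _{i}r_{i}}\,\mathrm{d}x<\infty $, i.e. $d(x)^{\mu _{i}}\in L^{r_{i}}(\Omega )$ by \cite[Lemma in page 726]{LM}, while $\max \{\gamma _{i},\theta _{i}\}<\frac{p_{i}-1}{r_{i}}$ keeps the gradient contributions in $L^{r_{i}}(\Omega )$; hence $(\mathrm{H})$ shows that the right-hand side of the $i$-th equation is bounded in $L^{r_{i}}(\Omega )$ with $r_{i}>N$. Feeding this into \cite[Theorem 2.1]{DM} yields simultaneously the $\mathcal{C}^{1,\sigma }$ regularity and a uniform bound $\Vert \nabla u_{i}\Vert _{\infty }<R_{\infty }$, with $R_{\infty }>0$ independent of the particular solution.

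With $R_{\infty }$ fixed I would work inside $\mathcal{K}_{R_{\infty }}^{\infty }$ defined in (\ref{18*}). For $(z_{1},z_{2})\in \mathcal{K}_{R_{\infty }}^{\infty }$ the truncated datum $\tilde{f}_{i}(x,z_{1},z_{2},\nabla z_{1},\nabla z_{2})$ is, by the same estimate and the bound $\vert \nabla \tilde{z}_{i}\vert \leq R_{\infty }$, a fixed function of $L^{r_{i}}(\Omega )\subset W^{-1,p_{i}^{\prime }}(\Omega )$; the unique solvability of $(\mathrm{P}_{z})$ then follows from the Minty--Browder theorem exactly as in Theorem~\ref{T1}, and \cite[Theorem 2.1]{DM} places the solution in $\mathcal{C}_{0}^{1,\sigma }(\overline{\Omega })$. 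This makes $\mathcal{T}_{\infty }$ well defined with range in $\mathcal{C}_{0}^{1,\sigma }(\overline{\Omega })\times \mathcal{C}_{0}^{1,\sigma }(\overline{\Omega })$, and since the embedding $\mathcal{C}_{0}^{1,\sigma }(\overline{\Omega })\hookrightarrow \mathcal{C}_{0}^{1}(\overline{\Omega })$ is compact, $\mathcal{T}_{\infty }(\mathcal{K}_{R_{\infty }}^{\infty })$ is relatively compact in the $\mathcal{C}^{1}$ topology.

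It remains to check the continuity of $\mathcal{T}_{\infty }$, the invariance of $\mathcal{K}_{R_{\infty }}^{\infty }$, and the location of the fixed point. Continuity in the $\mathcal{C}^{1}$ norm I would establish as in Theorem~\ref{T1}: if $(z_{1,n},z_{2,n})\rightarrow (z_{1},z_{2})$ in $\mathcal{K}_{R_{\infty }}^{\infty }$, the right-hand sides converge in $L^{r_{i}}(\Omega )$ (uniform gradient bounds plus dominated convergence), the $S_{+}$-property of $-\Delta _{p_{i}}$ upgrades weak to strong convergence in $W_{0}^{1,p_{i}}(\Omega )$, and the uniform $\mathcal{C}^{1,\sigma }$ estimate together with the compact embedding promotes this to $\mathcal{C}^{1}$ convergence. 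Invariance $\mathcal{T}_{\infty }(\mathcal{K}_{R_{\infty }}^{\infty })\subset \mathcal{K}_{R_{\infty }}^{\infty }$ is precisely the a priori bound $R_{\infty }$ applied to $(\mathrm{P}_{z})$. Schauder's fixed point theorem then gives $(u_{1},u_{2})\in \mathcal{K}_{R_{\infty }}^{\infty }$ with $\mathcal{T}_{\infty }(u_{1},u_{2})=(u_{1},u_{2})$, a $\mathcal{C}_{0}^{1,\sigma }$ pair; the truncation--comparison argument closing the proof of Theorem~\ref{T1} shows $\underline{u}_{i}\leq u_{i}\leq \overline{u}_{i}$, so $\tilde{u}_{i}=u_{i}$ and the fixed point is a genuine solution of $(\mathrm{P})$ lying in the prescribed rectangle.

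The main obstacle is the uniform $L^{\infty }$ gradient estimate of the first step: one must bound $\Vert \nabla u_{i}\Vert _{\infty }$ by a constant independent of the solution even though the source term itself carries $\nabla u_{1}$ and $\nabla u_{2}$. Breaking this circularity is exactly the role of the subcritical threshold $\max \{\gamma _{i},\theta _{i}\}<\frac{p_{i}-1}{r_{i}}$ in (\ref{5}), and the delicate point is to verify that \cite[Theorem 2.1]{DM} applies uniformly over the whole frozen family $(\mathrm{P}_{z})$ with its coupled, singular, gradient-dependent data. Once this estimate is secured, the rest is a faithful transcription of the $L^{p}$ argument into the H\"{o}lder-continuous setting.
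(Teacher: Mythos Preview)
Your proposal is correct and tracks the paper's own argument closely: the a priori $L^{\infty}$ gradient bound via \cite[Theorem 2.1]{DM}, the operator $\mathcal{T}_{\infty}$ on $\mathcal{K}_{\Lambda}^{\infty}$, Minty--Browder plus $\mathcal{C}^{1,\sigma}$ regularity for well-definedness, compactness through the embedding $\mathcal{C}^{1,\sigma}_{0}(\overline{\Omega})\hookrightarrow\mathcal{C}^{1}_{0}(\overline{\Omega})$, invariance from the sublinear power $\max\{\gamma_{i},\theta_{i}\}/(p_{i}-1)<1$, Schauder, and the closing comparison. The only deviation is your preliminary detour through Theorem~\ref{T1}; the paper does not take it (and your implication (\ref{5})$\Rightarrow$(\ref{6*}) can actually fail when $p_{i}\leq N/(N-1)$, since then $p_{i}'\geq N$ and $r_{i}>N$ need not dominate $p_{i}'$), but this step is dispensable because \cite[Theorem 2.1]{DM} delivers the $L^{\infty}$ bound directly without passing through $L^{p_{i}}$.
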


\begin{proof}
The proof is based on the same arguments as those developed to show Theorem %
\ref{T1}. Indeed, first observe that under assumptions $(\mathrm{H})$ and (%
\ref{5}), \cite[Theorem 2.1]{DM} guarantees that all solutions $%
(u_{1},u_{2}) $ of $(\mathrm{P})$ belong to $\mathcal{C}_{0}^{1,\sigma }(%
\overline{\Omega })\times \mathcal{C}_{0}^{1,\sigma }(\overline{\Omega }),$
for certain $\sigma \in (0,1),$ and there is a constant $L>0$ such that it
holds%
\begin{equation}
\left\Vert u_{1}\right\Vert _{C^{1,\sigma }(\overline{\Omega })},\left\Vert
u_{2}\right\Vert _{C^{1,\sigma }(\overline{\Omega })}<L\text{.}  \label{70}
\end{equation}%
Here, (\ref{70}) provides an a priori estimate to solutions of $(\mathrm{P})$%
, which is fundamental to setting up an argument based on Schauder's fixed
point Theorem. We employ the operator $\mathcal{T}_{\infty }$, defined on
the set $\mathcal{K}_{L}^{\infty }$ for the a priori estimate $L>0$, which
is associated to the auxiliary problem $(\mathrm{P}_{z})$.

For each $(z_{1},z_{2})\in \mathcal{K}_{L}^{\infty }$ and by $(\mathrm{H})$
with (\ref{5}), we have%
\begin{equation}
\begin{array}{l}
\tilde{f}_{i}(x,z_{1},z_{2},\nabla z_{1},\nabla z_{2}) \\ 
\leq M_{i}(d(x)^{\mu _{i}}+\left\vert \nabla \tilde{z}_{1}\right\vert
^{\gamma _{i}}+\left\vert \nabla \tilde{z}_{2}\right\vert ^{\theta _{i}}) \\ 
\leq M_{i}(d(x)^{\mu _{i}}+L^{\gamma _{i}}+L^{\theta _{i}}) \\ 
\leq M_{i}d(x)^{\mu _{i}}+2L^{\max \{\gamma _{i},\theta _{i}\}}\text{ in }%
\Omega ,\text{ for }i=1,2.%
\end{array}
\label{24}
\end{equation}%
Then, since $\mu _{i}>-1,$ Hardy-Sobolev inequality (see, e.g., \cite[Lemma
2.3]{AC}) guarantees that $\tilde{f}_{i}(x,z_{1},z_{2},\nabla z_{1},\nabla
z_{2})\in W^{-1,p_{i}^{\prime }}(\Omega )$, for $i=1,2$. Thus, the unique
solvability of $(u_{1},u_{2})$ in $(\mathrm{P}_{z})$ is readily derived from
Minty Browder's Theorem and therefore, $\mathcal{T}_{\infty }$ is well
defined. Moreover, the regularity theory up to the boundary in \cite{Hai}
yields $(u_{1},u_{2})\in \mathcal{C}_{0}^{1,\sigma }(\overline{\Omega }%
)\times \mathcal{C}_{0}^{1,\sigma }(\overline{\Omega }),$ $\sigma \in (0,1).$

The map $\mathcal{T}_{\infty }:\mathcal{K}_{L}^{\infty }\rightarrow
C_{0}^{1}(\overline{\Omega })\times C_{0}^{1}(\overline{\Omega })$ is
continuous and compact. The proof uses similar argument as the one developed
in the proof of Theorem \ref{T1}. However, it is important to note that the
integrability of the function $d(x)^{\mu _{i}}(u_{i,n}-u_{i}),$ which is
known by invoking the Hardy-Sobolev inequality under assumption $\mu _{i}>-1$
in (\ref{5}), is essential.

By (\ref{24}) and for $(z_{1},z_{2})\in \mathcal{K}_{R}^{\infty }$, \cite[%
Theorem 2.1]{DM} entails%
\begin{equation}
\begin{array}{l}
\Vert \nabla u_{i}\Vert _{\infty }\leq k_{p_{i}}||\tilde{f}_{i}(\cdot
,z_{1},z_{2},\nabla z_{1},\nabla z_{2})\Vert _{r_{i}}^{\frac{1}{p_{i}-1}} \\ 
\leq k_{0}\left( \left( \int_{\Omega }d(x)^{r_{i}\mu _{i}}\text{ }\mathrm{d}%
x\right) ^{\frac{1}{r_{i}(p_{i}-1)}}+|\Omega |^{\frac{1}{r_{i}(p_{i}-1)}}L^{%
\frac{\max \{\gamma _{i},\theta _{i}\}}{p_{i}-1}}\right) ,%
\end{array}
\label{19*}
\end{equation}%
for some positive constants $k_{p_{i}}$ and $k_{0}$ depending only on $%
p_{i}, $ $M$ and $\Omega $. Bearing in mind (\ref{5}), \cite[Lemma]{LM}
applies, showing that the integral term $\int_{\Omega }d(x)^{r_{i}\mu _{i}}$ 
$\mathrm{d}x$ is bounded by some constant $\rho _{0}>0$. At this point, by (%
\ref{5}) and (\ref{19*}), we have%
\begin{equation*}
\Vert \nabla u_{i}\Vert _{\infty }\leq k_{0}(\rho _{0}^{\frac{1}{%
r_{i}(p_{i}-1)}}+|\Omega |^{\frac{1}{r_{i}(p_{i}-1)}}L^{\frac{\max \{\gamma
_{i},\theta _{i}\}}{p_{i}-1}})\leq L,
\end{equation*}%
provided $L>1$ is large enough. Hence, we conclude that $\mathcal{T}_{\infty
}(\mathcal{K}_{L}^{\infty })\subset \mathcal{K}_{L}^{\infty }$. The rest of
the proof runs as the proof of Theorem \ref{T1} showing that $%
(u_{1},u_{2})\in \mathcal{C}_{0}^{1,\sigma }(\overline{\Omega })\times 
\mathcal{C}_{0}^{1,\sigma }(\overline{\Omega }),$ $\sigma \in (0,1)$, is a
solution of problem $($\textrm{P}$)$ within $\left[ \underline{u}_{1},%
\overline{u}_{1}\right] \times \left[ \underline{u}_{2},\overline{u}_{2}%
\right] $.
\end{proof}

\section{Applications}

\label{S3}

\subsection{Convective system}

We consider the following convective system of quasilinear elliptic equations%
\begin{equation*}
(\mathrm{P}_{1})\qquad \left\{ 
\begin{array}{ll}
-\Delta _{p_{1}}u_{1}=u_{1}^{\alpha _{1}}u_{2}^{\beta _{1}}+|\nabla
u_{1}|^{\gamma _{1}}+(1+|\nabla u_{2}|)^{\theta _{1}} & \text{in }\Omega \\ 
-\Delta _{p_{2}}u_{2}=u_{1}^{\alpha _{2}}u_{2}^{\beta _{2}}+(1+|\nabla
u_{1}|)^{\gamma _{2}}+|\nabla u_{2}|^{\theta _{2}} & \text{in }\Omega \\ 
u_{1},u_{2}>0\text{ in }\Omega ,\text{ }u_{1},u_{2}=0\text{ on }\partial
\Omega , & 
\end{array}%
\right.
\end{equation*}%
where the exponents $\alpha _{i},\beta _{i},\gamma _{i}$ and $\theta _{i}$
are real constants such that 
\begin{equation}
p_{i}-1>|\alpha _{i}|+|\beta _{i}|\geq \alpha _{i}+\beta _{i}>-\frac{1}{r_{i}%
}  \label{alpha1}
\end{equation}
and%
\begin{equation}
\gamma _{2},\theta _{1}\leq 0\leq \gamma _{1},\theta _{2},\text{ \ }\gamma
_{1}<\frac{p_{1}-1}{r_{1}}\text{ \ and \ }\theta _{2}<\frac{p_{2}-1}{r_{2}}%
\text{ \ with }r_{i}\geq \frac{(p_{i}^{\ast })^{\prime }}{(p_{i})^{\prime }}.
\label{gamma1}
\end{equation}

Let consider $\xi _{i},\xi _{i,\delta }\in C^{1}(\overline{\Omega })$ the
unique solution of%
\begin{equation}
-\Delta _{p_{i}}\xi _{i}(x)=1+d(x)^{\alpha _{i}+\beta _{i}}\text{ \ in }%
\Omega ,\text{ }\xi _{i}(x)=0\text{ \ on }\partial \Omega ,  \label{7}
\end{equation}%
and%
\begin{equation}
-\Delta _{p_{i}}\xi _{i,\delta }(x)=\left\{ 
\begin{array}{ll}
1+d(x)^{\alpha _{i}+\beta _{i}} & \text{in }\Omega \backslash \overline{%
\Omega }_{\delta } \\ 
-1 & \text{in }\Omega _{\delta }%
\end{array}%
\right. ,\text{ }\xi _{i,\delta }(x)=0\text{ \ on }\partial \Omega ,
\label{9}
\end{equation}%
where $\Omega _{\delta }:=\{x\in \Omega :d(x)<\delta \}$, with a fixed $%
\delta >0$ sufficiently small.

Hardy-Sobolev inequality (see, e.g., \cite[Lemma 2.3]{AC}) guarantees that
the right-hand sides of (\ref{7}) and (\ref{9}) belong to $%
W^{-1,p_{i}^{\prime }}(\Omega )$. Consequently, Minty-Browder Theorem
ensures the existence of unique $\xi _{i}$ and $\xi _{i,\delta }$ in (\ref{9}%
) and (\ref{7}). Moreover, according to \cite[Lemma 3.1]{DM}, there are
positive constants $c_{0}$ and $c_{1}$ such that%
\begin{equation}
c_{0}d(x)\leq \xi _{i,\delta }(x)\leq \xi _{i}(x)\leq c_{1}d(x),\text{ \ for
all }x\in \Omega .  \label{4}
\end{equation}%
For $\alpha _{i}+\beta _{i}\geq 0$ (resp. $\alpha _{i}+\beta _{i}<0$ and $%
r_{i}>N$) in (\ref{alpha1}) and (\ref{gamma1}), owing to \cite[Lemma 1]{BE}
(resp. \cite[Corollary 2.2]{DM}), we get the gradient estimate%
\begin{equation}
||\nabla \xi _{i}||_{\infty }\leq \mathrm{M},\text{ for }i=1,2,  \label{4*}
\end{equation}%
for a certain constant $\mathrm{M}>0$ depending on $p_{1},p_{2},N,$ $\Omega $
and $||d(x)||_{\infty }.$ Here, it is important to observe from (\ref{alpha1}%
) that $(\alpha _{i}+\beta _{i})r_{i}>-1$ which, by \cite[Lemma, page 726]%
{LM}, guarantees%
\begin{equation*}
\int\limits_{\Omega }(1+d(x)^{\alpha _{i}+\beta _{i}})^{r_{i}}\text{ }%
\mathrm{d}x\leq k_{0}(1+\int\limits_{\Omega }d(x)^{(\alpha _{i}+\beta
_{i})r_{i}}\text{ }\mathrm{d}x)<\infty ,
\end{equation*}%
for some constant $k_{0}>0.$

Set%
\begin{equation}
(\underline{u}_{1},\underline{u}_{2})=C^{-1}(\xi _{1,\delta },\xi _{2,\delta
})\quad \text{and}\quad (\overline{u}_{1},\overline{u}_{2})=C(\xi _{1},\xi
_{2}),  \label{sub-super}
\end{equation}%
where $C>1$ is a constant. Obviously, $\underline{u}_{i}\leqslant \overline{u%
}_{i}$ in $\overline{\Omega },$ for $i=1,2.$

The following result allows us to achieve useful comparison properties.

\begin{proposition}
\label{P1}Assume (\ref{alpha1}) and (\ref{gamma1}) hold true. Then, for $C>1$
sufficiently large in (\ref{sub-super}), it holds%
\begin{equation*}
-\Delta _{p_{1}}\underline{u}_{1}\leq w_{1}^{\alpha _{1}}w_{2}^{\beta _{1}}%
\text{\ \ in }\Omega ,
\end{equation*}%
\begin{equation*}
-\Delta _{p_{2}}\underline{u}_{2}\leq w_{1}^{\alpha _{2}}w_{2}^{\beta _{2}}%
\text{\ \ in }\Omega ,
\end{equation*}%
\begin{equation*}
-\Delta _{p_{1}}\overline{u}_{1}\geq w_{1}^{\alpha _{1}}w_{2}^{\beta _{1}}+(%
\mathrm{M}C)^{\gamma _{1}}\text{\ \ in }\Omega ,
\end{equation*}%
\begin{equation*}
-\Delta _{p_{2}}\overline{u}_{2}\geq w_{1}^{\alpha _{2}}w_{2}^{\beta _{2}}+(%
\mathrm{M}C)^{\theta _{2}}\text{\ \ in }\Omega ,
\end{equation*}%
whenever $(w_{1},w_{2})\in \lbrack \underline{u}_{1},\overline{u}_{1}]\times
\lbrack \underline{u}_{2},\overline{u}_{2}]$.
\end{proposition}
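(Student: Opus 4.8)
The plan is to exploit the $(p_i-1)$-homogeneity of the operator, namely $-\Delta_{p_i}(tv)=t^{p_i-1}(-\Delta_{p_i}v)$ for $t>0$, together with the defining equations (\ref{7}), (\ref{9}) and the two-sided bounds (\ref{4}). First I would record the exact values of the barriers' $p_i$-Laplacians: from (\ref{sub-super}) and (\ref{9}), $-\Delta_{p_i}\underline{u}_i = C^{-(p_i-1)}(1+d(x)^{\alpha_i+\beta_i})$ on $\Omega\setminus\overline{\Omega}_\delta$ and $-\Delta_{p_i}\underline{u}_i=-C^{-(p_i-1)}$ on $\Omega_\delta$, while from (\ref{7}), $-\Delta_{p_i}\overline{u}_i=C^{p_i-1}(1+d(x)^{\alpha_i+\beta_i})$ throughout $\Omega$. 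The central reduction is a pointwise two-sided estimate of the reaction term. Since $(w_1,w_2)\in[\underline{u}_1,\overline{u}_1]\times[\underline{u}_2,\overline{u}_2]$, inequalities (\ref{4}) give $C^{-1}c_0\,d(x)\le w_j\le C c_1\,d(x)$; raising to a power $s$ and distinguishing the sign of $s$ (a negative exponent reverses the inequalities) yields $C^{-|s|}\kappa(s)\,d(x)^s\le w_j^{s}\le C^{|s|}\tilde\kappa(s)\,d(x)^s$ with positive, $C$-independent constants. Multiplying the two factors produces
\begin{equation*}
\kappa_i\,C^{-(|\alpha_i|+|\beta_i|)}d(x)^{\alpha_i+\beta_i}\le w_1^{\alpha_i}w_2^{\beta_i}\le K_i\,C^{|\alpha_i|+|\beta_i|}d(x)^{\alpha_i+\beta_i},
\end{equation*}
with $\kappa_i,K_i>0$ independent of $C$; this is the estimate everything hinges on, and the only place where the signs of $\alpha_i,\beta_i$ intervene.

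For the subsolution inequalities I would split $\Omega$ according to (\ref{9}). On $\Omega_\delta$ the left-hand side equals $-C^{-(p_i-1)}<0$, which is automatically below the strictly positive $w_1^{\alpha_i}w_2^{\beta_i}$; this is precisely what the auxiliary datum $-1$ in (\ref{9}) is designed to achieve, since near $\partial\Omega$ the constant $1$ would otherwise make $-\Delta_{p_i}\underline{u}_i$ too large where $w_1^{\alpha_i}w_2^{\beta_i}$ is small. On $\Omega\setminus\overline{\Omega}_\delta$ one has $\delta\le d(x)\le\|d\|_\infty$, so $1+d(x)^{\alpha_i+\beta_i}$ and $d(x)^{\alpha_i+\beta_i}$ lie between positive constants; comparing the $C$-power $-(p_i-1)$ on the left with the $C$-power $-(|\alpha_i|+|\beta_i|)$ furnished by the lower bound, and invoking $|\alpha_i|+|\beta_i|<p_i-1$ from (\ref{alpha1}), the left-hand side decays strictly faster in $C$, so it is dominated once $C$ is large.

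For the supersolution inequalities I would work on all of $\Omega$ at once, writing $-\Delta_{p_i}\overline{u}_i=C^{p_i-1}+C^{p_i-1}d(x)^{\alpha_i+\beta_i}$ and allocating each summand to a competitor. The term $C^{p_i-1}d(x)^{\alpha_i+\beta_i}$ dominates the upper bound $K_i C^{|\alpha_i|+|\beta_i|}d(x)^{\alpha_i+\beta_i}$ as soon as $C^{(p_i-1)-(|\alpha_i|+|\beta_i|)}\ge K_i$, again by (\ref{alpha1}); the common factor $d(x)^{\alpha_i+\beta_i}>0$ cancels, so no boundary singularity survives even when $\alpha_i+\beta_i<0$. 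The term $C^{p_i-1}$ dominates the convection contribution $(\mathrm{M}C)^{\gamma_1}=\mathrm{M}^{\gamma_1}C^{\gamma_1}$ (resp.\ $(\mathrm{M}C)^{\theta_2}$) once $C^{(p_i-1)-\gamma_1}\ge\mathrm{M}^{\gamma_1}$, which is legitimate because (\ref{gamma1}), with $r_i\ge 1$, forces $\gamma_1<\frac{p_1-1}{r_1}\le p_1-1$ and $\theta_2<\frac{p_2-1}{r_2}\le p_2-1$. Adding the two bounds gives the stated inequalities, and choosing $C>1$ above the finitely many thresholds produced makes all four hold simultaneously for every admissible $(w_1,w_2)$.

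The main obstacle is the bookkeeping of the powers of $C$: the whole scheme succeeds only because $|\alpha_i|+|\beta_i|<p_i-1$ lets the subsolution datum decay faster and the supersolution datum grow faster than the reaction term, while $\gamma_1,\theta_2<p_i-1$ controls the convection contribution. Correctly tracking the signs of $\alpha_i,\beta_i$ in the two-sided reaction estimate, and recognizing that the $-1$ in (\ref{9}) is exactly what rescues the subsolution near $\partial\Omega$, are the two points requiring genuine care.
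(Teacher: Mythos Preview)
Your proposal is correct and follows essentially the same route as the paper: both obtain the two-sided estimate
\[
\kappa_i\,C^{-(|\alpha_i|+|\beta_i|)}d(x)^{\alpha_i+\beta_i}\le w_1^{\alpha_i}w_2^{\beta_i}\le K_i\,C^{|\alpha_i|+|\beta_i|}d(x)^{\alpha_i+\beta_i}
\]
by distinguishing the signs of $\alpha_i,\beta_i$, compute $-\Delta_{p_i}$ of the barriers via the $(p_i-1)$-homogeneity, and then compare the $C$-exponents using $|\alpha_i|+|\beta_i|<p_i-1$ from (\ref{alpha1}) and $\gamma_1,\theta_2<p_i-1$ from (\ref{gamma1}). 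Your treatment of the subsolution is in fact slightly more careful than the paper's written argument: you correctly carry the term $1+d(x)^{\alpha_i+\beta_i}$ on $\Omega\setminus\overline{\Omega}_\delta$ from (\ref{9}) and dispose of the extra ``$1$'' by using that $d(x)\ge\delta$ there, whereas the paper's displayed computation silently drops this ``$1$''.
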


\begin{proof}
On account of (\ref{4}) and (\ref{sub-super}), for all $(w_{1},w_{2})\in
\lbrack \underline{u}_{1},\overline{u}_{1}]\times \lbrack \underline{u}_{2},%
\overline{u}_{2}],$ we get 
\begin{equation}
\begin{array}{l}
w_{1}^{\alpha _{i}}w_{2}^{\beta _{i}}\geq \left\{ 
\begin{array}{ll}
\underline{u}_{1}^{\alpha _{i}}\underline{u}_{2}^{\beta _{i}} & \text{if }%
\alpha _{i},\beta _{i}\geq 0 \\ 
\overline{u}_{1}^{\alpha _{i}}\underline{u}_{2}^{\beta _{i}} & \text{if }%
\alpha _{i}\leq 0\leq \beta _{i} \\ 
\underline{u}_{1}^{\alpha _{i}}\overline{u}_{2}^{\beta _{i}} & \text{if }%
\beta _{i}\leq 0\leq \alpha _{i} \\ 
\overline{u}_{1}^{\alpha _{i}}\overline{u}_{2}^{\beta _{i}} & \text{if }%
\alpha _{i},\beta _{i}\leq 0%
\end{array}%
\right. \\ 
\geq \left\{ 
\begin{array}{ll}
(C^{-1}c_{0}d(x))^{\alpha _{i}+\beta _{i}} & \text{if }\alpha _{i},\beta
_{i}\geq 0 \\ 
(Cc_{1})^{\alpha _{i}}(C^{-1}c_{0})^{\beta _{i}}d(x)^{\alpha _{i}+\beta _{i}}
& \text{if }\alpha _{i}\leq 0\leq \beta _{i} \\ 
(C^{-1}c_{0})^{\alpha _{i}}(Cc_{1})^{\beta _{i}}d(x)^{\alpha _{i}+\beta _{i}}
& \text{if }\beta _{i}\leq 0\leq \alpha _{i} \\ 
(Cc_{1})^{\alpha _{i}+\beta _{i}}d(x)^{\alpha _{i}+\beta _{i}} & \text{if }%
\alpha _{i},\beta _{i}\leq 0%
\end{array}%
\right. \\ 
\geq \tilde{c}_{0}C^{-(|\alpha _{i}|+|\beta _{i}|)}d(x)^{\alpha _{i}+\beta
_{i}}\text{ \ in }\Omega%
\end{array}
\label{50}
\end{equation}%
and%
\begin{equation}
\begin{array}{l}
w_{1}^{\alpha _{i}}w_{2}^{\beta _{i}}\leq \left\{ 
\begin{array}{ll}
\overline{u}^{\alpha _{i}}\overline{v}^{\beta _{i}} & \text{if }\alpha
_{i},\beta _{i}\geq 0 \\ 
\underline{u}^{\alpha _{i}}\overline{v}^{\beta _{i}} & \text{if }\alpha
_{i}\leq 0\leq \beta _{i} \\ 
\overline{u}^{\alpha _{i}}\underline{v}^{\beta _{i}} & \text{if }\beta
_{i}\leq 0\leq \alpha _{i} \\ 
\underline{u}^{\alpha _{i}}\underline{v}^{\beta _{i}} & \text{if }\alpha
_{i},\beta _{i}\leq 0%
\end{array}%
\right. \\ 
\leq \left\{ 
\begin{array}{ll}
(Cc_{1}d(x))^{\alpha _{i}+\beta _{i}} & \text{if }\alpha _{i},\beta _{i}\geq
0 \\ 
(C^{-1}c_{0})^{\alpha _{i}}(Cc_{1})^{\beta _{i}}d(x)^{\alpha _{i}+\beta _{i}}
& \text{if }\alpha _{i}\leq 0\leq \beta _{i} \\ 
(Cc_{1})^{\alpha _{i}}(C^{-1}c_{0})^{\beta _{i}}d(x)^{\alpha _{i}+\beta _{i}}
& \text{if }\beta _{i}\leq 0\leq \alpha _{i} \\ 
(C^{-1}c_{0})^{\alpha _{i}}(C^{-1}c_{0})^{\beta _{i}}d(x)^{\alpha _{i}+\beta
_{i}} & \text{if }\alpha _{i},\beta _{i}\leq 0%
\end{array}%
\right. \\ 
\leq \tilde{c}_{1}C^{|\alpha _{i}|+|\beta _{i}|}d(x)^{\alpha _{i}+\beta _{i}}%
\text{ \ in }\Omega ,%
\end{array}%
\end{equation}%
where $\tilde{c}_{0},\tilde{c}_{1}$ are positive constants depending on $%
c_{0},c_{1},\alpha _{i},\beta _{i}$. From (\ref{alpha1}), (\ref{sub-super}),
(\ref{9}), (\ref{7}) and (\ref{4}), we get 
\begin{equation}
\begin{array}{l}
-\Delta _{p_{1}}\underline{u}=C^{-(p_{1}-1)}\left\{ 
\begin{array}{ll}
d(x)^{\alpha _{1}+\beta _{1}} & \text{in }\Omega \backslash \overline{\Omega 
}_{\delta } \\ 
-1 & \text{in }\Omega _{\delta }%
\end{array}%
\right. \\ 
\leq \tilde{c}_{0}m_{1}C^{-(|\alpha _{1}|+|\beta _{1}|)}d(x)^{\alpha
_{1}+\beta _{1}}\text{ in }\Omega ,%
\end{array}%
\end{equation}%
\begin{equation}
-\Delta _{p_{2}}\underline{v}=C^{-(p_{2}-1)}\left\{ 
\begin{array}{ll}
d(x)^{\alpha _{2}+\beta _{2}} & \text{in }\Omega \backslash \overline{\Omega 
}_{\delta } \\ 
-1 & \text{in }\Omega _{\delta }%
\end{array}%
\right. \leq \tilde{c}_{0}C^{-(|\alpha _{2}|+|\beta _{2}|)}d(x)^{\alpha
_{2}+\beta _{2}}\text{ in }\Omega ,
\end{equation}%
\begin{equation}
-\Delta _{p_{1}}\overline{u}_{1}=C^{p_{1}-1}\left( 1+d(x)^{\alpha _{1}+\beta
_{1}}\right) \geq \tilde{c}_{1}C^{|\alpha _{1}|+|\beta _{1}|}d(x)^{\alpha
_{1}+\beta _{1}}+(\mathrm{M}C)^{\gamma _{1}}\text{ \ in }\Omega
\end{equation}%
and%
\begin{equation}
-\Delta _{p_{2}}\overline{u}_{2}=C^{p_{2}-1}(1+d(x)^{\alpha _{2}+\beta
_{2}})\geq \tilde{c}_{1}C^{|\alpha _{2}|+|\beta _{2}|}d(x)^{\alpha
_{2}+\beta _{2}}+(\mathrm{M}C)^{\theta _{2}}\text{ \ in }\Omega ,  \label{51}
\end{equation}%
provided $C>1$ sufficiently large. Consequently, combining (\ref{50})-(\ref%
{51}), the desired estimates follow.
\end{proof}

\begin{theorem}
\label{T3}Assume that (\ref{alpha1}) and (\ref{gamma1}) are fulfilled. Then,
if $r_{i}\geq p_{i}^{\prime },$ system $(\mathrm{P}_{1})$ admits a
(positive) solution $(u_{1},u_{2})\in W_{0}^{1,p_{1}}(\Omega )\times
W_{0}^{1,p_{2}}(\Omega ),$ within $\left[ \underline{u}_{1},\overline{u}_{1}%
\right] \times \left[ \underline{u}_{2},\overline{u}_{2}\right] .$ Moreover,
if $r_{i}>N,$ this solution $(u_{1},u_{2})$ belongs to $\mathcal{C}%
_{0}^{1,\sigma }(\overline{\Omega })\times \mathcal{C}_{0}^{1,\sigma }(%
\overline{\Omega }),$ for certain $\sigma \in (0,1)$.
\end{theorem}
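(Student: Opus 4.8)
The plan is to apply the abstract existence and regularity theorems (Theorem~\ref{T1} and Theorem~\ref{T2}) to the convective system $(\mathrm{P}_1)$, by verifying that the vector field $(f_1,f_2)$ satisfies the growth hypothesis $(\mathrm{H})$ and the relevant exponent restrictions, and that the pair $(\underline{u}_1,\underline{u}_2),(\overline{u}_1,\overline{u}_2)$ defined in (\ref{sub-super}) genuinely forms a sub-supersolution in the sense of (\ref{sub})--(\ref{sup}). The reactions here are $f_i(x,s_1,s_2,\xi_1,\xi_2)=s_1^{\alpha_i}s_2^{\beta_i}+(\text{convection terms in }\xi_1,\xi_2)$, so the main points are to control the singular reaction term $s_1^{\alpha_i}s_2^{\beta_i}$ on the rectangle and to match the gradient exponents against the thresholds in (\ref{5*}) or (\ref{5}).

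First I would verify $(\mathrm{H})$. On the rectangle $[\underline{u}_1,\overline{u}_1]\times[\underline{u}_2,\overline{u}_2]$, the bounds (\ref{4}) and (\ref{sub-super}) give $C^{-1}c_0 d(x)\le \underline{u}_i\le \overline{u}_i\le Cc_1 d(x)$, so arguing as in (\ref{50}) one gets $s_1^{\alpha_i}s_2^{\beta_i}\le \mathrm{const}\cdot d(x)^{\alpha_i+\beta_i}$ for $s_i\in[\underline{u}_i,\overline{u}_i]$; thus the reaction term contributes a singular weight with exponent $\mu_i=\alpha_i+\beta_i$. The convection pieces $|\nabla u_1|^{\gamma_1}$, $(1+|\nabla u_2|)^{\theta_1}$ etc.\ are dominated by $\mathrm{const}\,(1+|\xi_1|^{\gamma_i}+|\xi_2|^{\theta_i})$ after using $(a+b)^\theta\le \mathrm{const}(1+b^\theta)$ for the shifted terms and absorbing the signs $\gamma_2,\theta_1\le 0$ (where a negative exponent on $1+|\nabla u|$ is simply bounded by $1$). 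This produces exactly the structure required by $(\mathrm{H})$ with $\mu_i=\alpha_i+\beta_i$ and the stated $\gamma_i,\theta_i$; the constraints (\ref{alpha1}) give $\mu_i>-1/r_i$ and $|\alpha_i|+|\beta_i|<p_i-1$, while (\ref{gamma1}) gives $\max\{\gamma_i,\theta_i\}<(p_i-1)/r_i$, so I would check these translate into (\ref{6*}) and (\ref{5*}) when $r_i\ge p_i'$ (for the $W^{1,p}$ statement) and into (\ref{5}) when $r_i>N$ (for the $\mathcal{C}^{1,\sigma}$ statement).

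Next I would invoke Proposition~\ref{P1}, which already establishes the differential inequalities $-\Delta_{p_1}\underline{u}_1\le w_1^{\alpha_1}w_2^{\beta_1}$, $-\Delta_{p_1}\overline{u}_1\ge w_1^{\alpha_1}w_2^{\beta_1}+(\mathrm{M}C)^{\gamma_1}$, and their analogues for the second component, for $C>1$ large and all $(w_1,w_2)$ in the rectangle. Combining these with the gradient estimate (\ref{4*}) on $\|\nabla\xi_i\|_\infty\le \mathrm{M}$ (so that $|\nabla\overline{u}_i|\le \mathrm{M}C$ and the convection terms evaluated along the rectangle are dominated by $(\mathrm{M}C)^{\gamma_1}$, $(\mathrm{M}C)^{\theta_2}$), I would conclude that $(\underline{u}_1,\underline{u}_2)$ and $(\overline{u}_1,\overline{u}_2)$ satisfy the weak inequalities (\ref{sub}) and (\ref{sup}): the supersolution inequalities hold because the extra $(\mathrm{M}C)^{\gamma_1}$ term absorbs the positive convection contribution $|\nabla w_1|^{\gamma_1}+(1+|\nabla w_2|)^{\theta_1}$, and the subsolution inequalities hold because the nonpositive exponents $\gamma_2,\theta_1\le0$ make the dropped convection terms nonnegative. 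Once these are in place, Theorem~\ref{T1} yields a weak solution in the rectangle when $r_i\ge p_i'$, and Theorem~\ref{T2} upgrades it to $\mathcal{C}_0^{1,\sigma}(\overline{\Omega})\times\mathcal{C}_0^{1,\sigma}(\overline{\Omega})$ when $r_i>N$; positivity follows since $\underline{u}_i\ge C^{-1}c_0 d(x)>0$ in $\Omega$.

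The main obstacle I anticipate is bookkeeping the exponent inequalities consistently across the two regimes and confirming that the single parameter $r_i$ simultaneously governs the singular-weight integrability (via $\mu_i r_i>-1$, i.e.\ \cite[Lemma, page 726]{LM}) and the gradient-growth thresholds in both (\ref{5*}) and (\ref{5}); in particular I must check that the hypothesis $r_i\ge(p_i^\ast)'/(p_i)'$ from (\ref{gamma1}) together with $r_i\ge p_i'$ actually implies the four cross-exponent bounds in (\ref{5*}), which is where the interplay between the two equations enters. A secondary delicate point is that Proposition~\ref{P1} was stated for the pure reaction term $w_1^{\alpha_i}w_2^{\beta_i}$, so I need the convection bounds along the rectangle to be exactly the constants $(\mathrm{M}C)^{\gamma_1}$ and $(\mathrm{M}C)^{\theta_2}$ appearing there; verifying that the sign conditions $\gamma_2,\theta_1\le0$ indeed let the reaction-only inequalities of Proposition~\ref{P1} imply the full sub-supersolution inequalities for $(\mathrm{P}_1)$ is the crux that ties the construction together.
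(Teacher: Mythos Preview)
Your proposal is correct and follows essentially the same route as the paper: verify that $(\underline{u}_1,\underline{u}_2),(\overline{u}_1,\overline{u}_2)$ from (\ref{sub-super}) satisfy (\ref{sub})--(\ref{sup}) by combining Proposition~\ref{P1} with the gradient bound (\ref{4*}) and the sign conditions $\gamma_2,\theta_1\le 0$, then invoke Theorems~\ref{T1} and~\ref{T2} according to whether $r_i\ge p_i'$ or $r_i>N$. One small slip: in your third paragraph the convection term in the supersolution inequality for $\overline{u}_1$ should read $|\nabla \overline{u}_1|^{\gamma_1}+(1+|\nabla w_2|)^{\theta_1}$ rather than $|\nabla w_1|^{\gamma_1}+\cdots$, since the definition (\ref{sup}) evaluates $f_1$ at $\nabla\overline{u}_1$; you had this right earlier in the same paragraph, so it is just a typo.
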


\begin{proof}
The idea is to apply Theorems \ref{T1} and \ref{T2} by showing that the
pairs $(\underline{u}_{1},\underline{u}_{2})$ and $(\overline{u}_{1},%
\overline{u}_{2})$ in (\ref{sub-super}) satisfy (\ref{sub}) and (\ref{sup}).
With this aim, let $(w_{1},w_{2})\in W_{0}^{1,p_{1}}(\Omega )\times
W_{0}^{1,p_{2}}(\Omega )$ such that $\underline{u}_{i}\leq w_{i}\leq 
\overline{u}_{i},$ $i=1,2.$ By Proposition \ref{P1}, it readily seen that%
\begin{equation*}
-\Delta _{p_{1}}\underline{u}_{1}\leq w_{1}^{\alpha _{1}}w_{2}^{\beta
_{1}}+|\nabla \underline{u}_{1}|^{\gamma _{1}}+(1+|\nabla w_{2}|)^{\theta
_{1}}\text{ in }\Omega ,
\end{equation*}%
\begin{equation*}
-\Delta _{p_{2}}\underline{u}_{2}\leq w_{1}^{\alpha _{2}}w_{2}^{\beta
_{2}}+(1+|\nabla w_{1}|)^{\gamma _{2}}+|\nabla \underline{u}_{2}|^{\theta
_{2}}\text{ in }\Omega ,
\end{equation*}%
and therefore, $(\underline{u}_{1},\underline{u}_{2}):=C^{-1}(\xi _{1,\delta
},\xi _{2,\delta })$ fulfills (\ref{sub}).

On account of (\ref{gamma1}) and (\ref{4*}), we get 
\begin{equation}
\begin{array}{l}
|\nabla \overline{u}_{1}|^{\gamma _{1}}+(1+|\nabla w_{2}|)^{\theta _{1}}\leq
1+|\nabla \overline{u}_{1}|^{\gamma _{1}} \\ 
\leq 1+(C||\nabla \xi _{1}||_{\infty })^{\gamma _{1}}\leq 1+(C\mathrm{M}%
)^{\gamma _{1}}\text{ in }\Omega%
\end{array}
\label{23}
\end{equation}%
and 
\begin{equation}
\begin{array}{l}
(1+|\nabla w_{2}|)^{\gamma _{2}}+|\nabla \overline{u}_{2}|^{\theta _{2}}\leq
1+|\nabla \overline{u}_{2}|^{\theta _{2}} \\ 
\leq 1+(C||\nabla \xi _{2}||_{\infty })^{\theta _{2}}\leq 1+(C\mathrm{M}%
)^{\theta _{2}}\text{ in }\Omega .%
\end{array}%
\end{equation}%
Then, again by Proposition \ref{P1}, we obtain%
\begin{equation*}
-\Delta _{p_{1}}\overline{u}_{1}\geq w_{1}^{\alpha _{1}}w_{2}^{\beta
_{1}}+|\nabla \overline{u}_{1}|^{\gamma _{1}}+(1+|\nabla w_{2}|)^{\theta
_{1}}\text{ in }\Omega
\end{equation*}%
and%
\begin{equation*}
-\Delta _{p_{2}}\overline{u}_{2}\geq w_{1}^{\alpha _{2}}w_{2}^{\beta
_{2}}+(1+|\nabla w_{1}|)^{\gamma _{2}}+|\nabla \overline{u}_{2}|^{\theta
_{2}}\text{ in }\Omega ,
\end{equation*}%
showing that (\ref{sup}) is fulfilled for $(\overline{u}_{1},\overline{u}%
_{2}):=C(\xi _{1},\xi _{2})$.

Consequently, depending on whether $r_{i}\geq p_{i}^{\prime }$ or $r_{i}>N$
in (\ref{alpha1})-(\ref{gamma1}), Theorems \ref{T1} and \ref{T2} provide a
solution $(u_{1},u_{2})$ in $W_{0}^{1,p_{1}}(\Omega )\times
W_{0}^{1,p_{2}}(\Omega )$ and $\mathcal{C}_{0}^{1,\sigma }(\overline{\Omega }%
)\times \mathcal{C}_{0}^{1,\sigma }(\overline{\Omega }),$ $\sigma \in (0,1),$
respectively, for problem $(\mathrm{P}_{1})$ such that $C^{-1}\xi _{i,\delta
}\leq u_{i}\leq C\xi _{i},$ $i=1,2.$ This ends the proof.
\end{proof}

\begin{remark}
Theorem \ref{T3} still valid if the gradient terms $(1+|\nabla
u_{1}|)^{\gamma _{2}}$ and $(1+|\nabla u_{2}|)^{\theta _{1}}$ are replaced
by bounded functions $h_{1}(\nabla u_{1})$ and $h_{2}(\nabla u_{2})$,
respectively.
\end{remark}

\subsection{Absorption system}

We state the following quasilinear absorption elliptic system%
\begin{equation*}
(\mathrm{P}_{2})\qquad \left\{ 
\begin{array}{l}
-\Delta _{p_{1}}u_{1}=u_{1}^{\alpha _{1}}u_{2}^{\beta _{1}}-|\nabla
u_{1}|^{\eta _{1}}\text{ \ in }\Omega \\ 
-\Delta _{p_{2}}u_{2}=u_{1}^{\alpha _{2}}u_{2}^{\beta _{2}}-|\nabla
u_{2}|^{\eta _{2}}\text{ \ in }\Omega \\ 
u_{1},u_{2}>0\text{ in }\Omega ,\text{ \ }u_{1},u_{2}=0\text{ on }\partial
\Omega ,%
\end{array}%
\right.
\end{equation*}%
which presents the singularities at the origin by assuming that the real
exponents $\alpha _{i},\beta _{i}$ satisfy (\ref{alpha1}) and $\eta _{i}$
fulfill 
\begin{equation}
|\alpha _{i}|+|\beta _{i}|\leq \eta _{i}<\frac{p_{i}-1}{r_{i}}\text{ \ with }%
r_{i}\geq \frac{(p_{i}^{\ast })^{\prime }}{(p_{i})^{\prime }}.
\label{gamma2}
\end{equation}%
Let $z_{i}\in C^{1}(\overline{\Omega })$ and $z_{i,\delta }\in C^{1}(%
\overline{\Omega })$ be the unique solution of%
\begin{equation}
-\Delta _{p_{i}}z_{i}(x)=d(x)^{\alpha _{i}+\beta _{i}}\text{ \ in }\Omega ,%
\text{ }z_{i}(x)=0\text{ \ on }\partial \Omega ,  \label{35*}
\end{equation}%
and%
\begin{equation}
-\Delta _{p_{i}}z_{i,\delta }(x)=\left\{ 
\begin{array}{ll}
d(x)^{\alpha _{i}+\beta _{i}} & \text{in }\Omega \backslash \overline{\Omega 
}_{\delta } \\ 
-1 & \text{in }\Omega _{\delta }%
\end{array}%
\right. ,\text{ }z_{i,\delta }(x)=0\text{ \ on }\partial \Omega ,  \label{35}
\end{equation}%
where $\Omega _{\delta }:=\{x\in \Omega :d(x)<\delta \}$, with a fixed $%
\delta >0$ sufficiently small. According to \cite[Lemma 3.1]{DM}, there are
positive constants $c_{2}$ and $c_{3}$ such that%
\begin{equation}
c_{2}d(x)\leq z_{i,\delta }(x)\leq z_{i}(x)\leq c_{3}d(x)\text{ \ for all }%
x\in \Omega ,  \label{36}
\end{equation}%
Moreover, for $\alpha _{i}+\beta _{i}\geq 0$ (resp. $\alpha _{i}+\beta
_{i}<0 $ and $r_{i}>N$) in (\ref{7}) and (\ref{alpha1}), owing to \cite[%
Lemma 1]{BE} (resp. \cite[Corollary 2.2]{DM}), we get the gradient estimate%
\begin{equation}
||\nabla z_{i}||_{\infty },||\nabla z_{i,\delta }||_{\infty }\leq \mathrm{%
\hat{M}},\text{ for }i=1,2,  \label{36*}
\end{equation}%
for a certain constant $\mathrm{\hat{M}}>0$ depending on $p_{1},p_{2},N$ and 
$\Omega .$

Set%
\begin{equation}
(\underline{u}_{1},\underline{u}_{2})=C^{-1}(z_{1,\delta },z_{2,\delta
})\quad \text{and}\quad (\overline{u}_{1},\overline{u}_{2})=C(z_{1},z_{2}),
\label{sub-super2}
\end{equation}%
where $C>1$ is a constant. Obviously, $\underline{u}_{i}\leqslant \overline{u%
}_{i}$ in $\overline{\Omega },$ for $i=1,2.$

The following result allows us to achieve useful comparison properties.

\begin{proposition}
\label{P2}Assume (\ref{alpha1}) and (\ref{gamma2}) hold true. Then, for $C>1$
sufficiently large in (\ref{sub-super2}), it holds%
\begin{equation}
-\Delta _{p_{i}}\underline{u}_{i}\leq w_{1}^{\alpha _{i}}w_{2}^{\beta
_{i}}-(C^{-1}\mathrm{\hat{M}})^{\eta _{i}}\text{\ \ in }\Omega ,  \label{55}
\end{equation}%
\begin{equation}
-\Delta _{p_{i}}\overline{u}_{i}\geq w_{1}^{\alpha _{i}}w_{2}^{\beta _{i}}%
\text{\ \ in }\Omega ,  \label{55*}
\end{equation}%
whenever $(w_{1},w_{2})\in \lbrack \underline{u}_{1},\overline{u}_{1}]\times
\lbrack \underline{u}_{2},\overline{u}_{2}],i=1,2$.
\end{proposition}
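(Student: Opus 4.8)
The plan is to mirror the structure of the proof of Proposition~\ref{P1}, adapting it to the absorption setting. The key structural difference is that now the sub-supersolution pair is built from $(z_i,z_{i,\delta})$ defined via (\ref{35*})--(\ref{35}), in which the right-hand side is $d(x)^{\alpha_i+\beta_i}$ (without the additive $+1$), and the comparison to be proved involves a \emph{subtracted} gradient term $-|\nabla u_i|^{\eta_i}$ rather than an added convection term. Throughout I would use the two-sided bound (\ref{36}) and the uniform gradient estimate (\ref{36*}) exactly as (\ref{4}) and (\ref{4*}) were used before.

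First I would establish the pointwise two-sided estimate for the reaction term $w_1^{\alpha_i}w_2^{\beta_i}$. Exactly as in (\ref{50}), for any $(w_1,w_2)\in[\underline{u}_1,\overline{u}_1]\times[\underline{u}_2,\overline{u}_2]$ one splits into the four sign cases of $(\alpha_i,\beta_i)$, and using (\ref{36}) and the definitions (\ref{sub-super2}) one extracts the factor $d(x)^{\alpha_i+\beta_i}$ together with a power of $C$. This yields the lower bound
\begin{equation*}
w_1^{\alpha_i}w_2^{\beta_i}\geq \tilde{c}_0 C^{-(|\alpha_i|+|\beta_i|)}d(x)^{\alpha_i+\beta_i}\quad\text{in }\Omega
\end{equation*}
and the matching upper bound $w_1^{\alpha_i}w_2^{\beta_i}\leq \tilde{c}_1 C^{|\alpha_i|+|\beta_i|}d(x)^{\alpha_i+\beta_i}$, where $\tilde{c}_0,\tilde{c}_1>0$ depend only on $c_2,c_3,\alpha_i,\beta_i$. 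This step is routine once the case analysis is set up.

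Next I would compute $-\Delta_{p_i}\underline{u}_i$ and $-\Delta_{p_i}\overline{u}_i$ directly from the equations satisfied by $z_{i,\delta}$ and $z_i$. Since $\Delta_{p_i}$ scales as $-\Delta_{p_i}(Cv)=C^{p_i-1}(-\Delta_{p_i}v)$, from (\ref{35}) one gets $-\Delta_{p_i}\underline{u}_i=C^{-(p_i-1)}d(x)^{\alpha_i+\beta_i}$ on $\Omega\setminus\overline{\Omega}_\delta$ and $-C^{-(p_i-1)}$ on $\Omega_\delta$, while from (\ref{35*}) one gets $-\Delta_{p_i}\overline{u}_i=C^{p_i-1}d(x)^{\alpha_i+\beta_i}$. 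For the supersolution inequality (\ref{55*}) I would compare $C^{p_i-1}d(x)^{\alpha_i+\beta_i}$ against the reaction upper bound $\tilde{c}_1 C^{|\alpha_i|+|\beta_i|}d(x)^{\alpha_i+\beta_i}$; since (\ref{alpha1}) gives $p_i-1>|\alpha_i|+|\beta_i|$, choosing $C>1$ large enough makes $C^{p_i-1}\geq \tilde{c}_1 C^{|\alpha_i|+|\beta_i|}$, so (\ref{55*}) follows (the subtracted gradient term only helps here and may be dropped).

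The main obstacle is the subsolution inequality (\ref{55}), because of the negative absorption term $-(C^{-1}\mathrm{\hat M})^{\eta_i}$ on the right. On $\Omega\setminus\overline{\Omega}_\delta$ one must verify
\begin{equation*}
C^{-(p_i-1)}d(x)^{\alpha_i+\beta_i}\leq \tilde{c}_0 C^{-(|\alpha_i|+|\beta_i|)}d(x)^{\alpha_i+\beta_i}-(C^{-1}\mathrm{\hat M})^{\eta_i};
\end{equation*}
here the reaction lower bound must dominate both the $p_i$-Laplacian output \emph{and} the absorption term. Since $\eta_i\geq|\alpha_i|+|\beta_i|$ by (\ref{gamma2}), the absorption term $(C^{-1}\mathrm{\hat M})^{\eta_i}$ decays in $C$ at least as fast as the reaction lower bound's prefactor $C^{-(|\alpha_i|+|\beta_i|)}$, which is exactly why this exponent condition is imposed; combined with $p_i-1>|\alpha_i|+|\beta_i|$, a large $C$ absorbs both negative contributions. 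The delicate sub-case is the inner region $\Omega_\delta$, where the left-hand $p_i$-Laplacian output equals $-C^{-(p_i-1)}<0$: there the negativity of the principal term makes the inequality easier, since a negative quantity is bounded above by the reaction lower bound minus the (positive) absorption term provided $C$ is large, using that $d(x)$ is bounded above on $\Omega_\delta$ and the uniform gradient bound (\ref{36*}) controls $|\nabla\underline{u}_i|\leq C^{-1}\mathrm{\hat M}$. I would therefore split $\Omega=\Omega_\delta\cup(\Omega\setminus\overline{\Omega}_\delta)$, handle each region by the appropriate choice of large $C$, and conclude (\ref{55}). Combining the reaction estimates with the computed $p_i$-Laplacian expressions then yields both (\ref{55}) and (\ref{55*}), completing the proof.
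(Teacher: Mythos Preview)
Your proposal is correct and follows essentially the same route as the paper: obtain the two-sided pointwise bounds $\tilde{c}_0 C^{-(|\alpha_i|+|\beta_i|)}d(x)^{\alpha_i+\beta_i}\le w_1^{\alpha_i}w_2^{\beta_i}\le \tilde{c}_1 C^{|\alpha_i|+|\beta_i|}d(x)^{\alpha_i+\beta_i}$ from (\ref{36}) and (\ref{sub-super2}), compute $-\Delta_{p_i}\underline{u}_i$ and $-\Delta_{p_i}\overline{u}_i$ by homogeneity of the $p_i$-Laplacian, and compare powers of $C$ via $p_i-1>|\alpha_i|+|\beta_i|$ from (\ref{alpha1}) and $\eta_i\ge|\alpha_i|+|\beta_i|$ from (\ref{gamma2}), splitting the subsolution check over $\Omega_\delta$ and $\Omega\setminus\overline{\Omega}_\delta$ exactly as the paper does in (\ref{40})--(\ref{41}). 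The only cosmetic difference is that the paper's case analysis for $w_1^{\alpha_i}w_2^{\beta_i}$ in (\ref{42})--(\ref{45}) lists only the nonpositive sign configurations of $(\alpha_i,\beta_i)$, while you invoke all four sign cases as in (\ref{50}); the resulting bounds and the remainder of the argument coincide.
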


\begin{proof}
On the basis of (\ref{sub-super2}) and (\ref{36}), for all $(w_{1},w_{2})\in
\lbrack \underline{u}_{1},\overline{u}_{1}]\times \lbrack \underline{u}_{2},%
\overline{u}_{2}],$ we get 
\begin{equation}
\begin{array}{l}
w_{1}^{\alpha _{i}}w_{2}^{\beta _{i}}\geq \left\{ 
\begin{array}{ll}
1 & \text{if }\alpha _{i},\beta _{i}=0 \\ 
\overline{u}_{1}^{\alpha _{i}} & \text{if }\alpha _{i}<0=\beta _{i} \\ 
\overline{u}_{2}^{\beta _{i}} & \text{if }\beta _{i}<0=\alpha _{i} \\ 
\overline{u}_{1}^{\alpha _{i}}\overline{u}_{2}^{\beta _{i}} & \text{if }%
\alpha _{i},\beta _{i}<0%
\end{array}%
\right. \geq \left\{ 
\begin{array}{ll}
1 & \text{if }\alpha _{i},\beta _{i}=0 \\ 
(Cc_{3}d(x))^{\alpha _{i}} & \text{if }\alpha _{i}<0=\beta _{i} \\ 
(Cc_{3}d(x))^{\beta _{i}} & \text{if }\beta _{i}<0=\alpha _{i} \\ 
(Cc_{3}d(x))^{\alpha _{i}+\beta _{i}} & \text{if }\alpha _{i},\beta _{i}<0%
\end{array}%
\right. \\ 
\geq \tilde{c}_{1}C^{-(|\alpha _{i}|+|\beta _{i}|)}d(x)^{\alpha _{i}+\beta
_{i}}\text{ in }\Omega ,%
\end{array}
\label{42}
\end{equation}%
and%
\begin{equation}
\begin{array}{l}
w_{1}^{\alpha _{i}}w_{2}^{\beta _{i}}\leq \left\{ 
\begin{array}{ll}
1 & \text{if }\alpha _{i},\beta _{i}=0 \\ 
\underline{u}^{\alpha _{i}} & \text{if }\alpha _{i}<0=\beta _{i} \\ 
\underline{v}^{\beta _{i}} & \text{if }\beta _{i}<0=\alpha _{i} \\ 
\underline{u}^{\alpha _{i}}\underline{v}^{\beta _{i}} & \text{if }\alpha
_{i},\beta _{i}<0%
\end{array}%
\right. \\ 
\leq \left\{ 
\begin{array}{ll}
1 & \text{if }\alpha _{i},\beta _{i}=0 \\ 
(C^{-1}c_{2})^{\alpha _{i}}d(x)^{\alpha _{i}} & \text{if }\alpha
_{i}<0=\beta _{i} \\ 
(C^{-1}c_{2})^{\beta _{i}}d(x)^{\beta _{i}} & \text{if }\beta _{i}<0=\alpha
_{i} \\ 
(C^{-1}c_{2})^{\alpha _{i}+\beta _{i}}d(x)^{\alpha _{i}+\beta _{i}} & \text{%
if }\alpha _{i},\beta _{i}<0%
\end{array}%
\right. \leq \tilde{c}_{2}C^{|\alpha _{i}|+|\beta _{i}|}d(x)^{\alpha
_{i}+\beta _{i}},%
\end{array}
\label{45}
\end{equation}%
where $\tilde{c}_{2},\tilde{c}_{3}$ are positive constants depending on $%
\tilde{c}_{1},\tilde{c}_{2},\alpha _{i},\beta _{i}$. From (\ref{alpha1}) and
(\ref{gamma2}), one gets%
\begin{equation*}
C^{p_{i}-1}d(x)^{\alpha _{i}+\beta _{i}}\geq \tilde{c}_{2}C^{|\alpha
_{i}|+|\beta _{i}|}d(x)^{\alpha _{i}+\beta _{i}}\text{\ \ in}\ \Omega
\end{equation*}%
and 
\begin{equation}
\begin{array}{l}
C^{-(p_{i}-1)}d(x)^{\alpha _{i}+\beta _{i}}+(C^{-1}\mathrm{\hat{M}})^{\eta
_{i}} \\ 
\leq \max \{C^{-(p_{i}-1)},C^{-\eta _{i}}\}(d(x)^{\alpha _{i}+\beta _{i}}+%
\mathrm{\hat{M}}^{\eta _{i}}) \\ 
\leq \max \{C^{-(p_{i}-1)},C^{-\eta _{i}}\}(\delta ^{\alpha _{i}+\beta _{i}}+%
\mathrm{\hat{M}}^{\eta _{i}}) \\ 
\leq \tilde{c}_{i}C^{-(|\alpha _{i}|+|\beta _{i}|)}\delta ^{\alpha
_{i}+\beta _{i}}\leq \tilde{c}_{i}C^{-(|\alpha _{i}|+|\beta
_{i}|)}d(x)^{\alpha _{i}+\beta _{i}}\text{\ \ in}\ \Omega \backslash 
\overline{\Omega }_{\delta },%
\end{array}
\label{40}
\end{equation}%
while since 
\begin{equation*}
-C^{-(p_{i}-1)}\leq C^{-(p_{i}-1)}d(x)^{\alpha _{i}+\beta _{i}}\text{ in }%
\Omega _{\delta },
\end{equation*}%
we deduce that%
\begin{equation}
-C^{-(p_{i}-1)}+(C^{-1}\mathrm{\hat{M}})^{\eta _{i}}\leq \tilde{c}%
_{i}C^{-(|\alpha _{i}|+|\beta _{i}|)}d(x)^{\alpha _{i}+\beta _{i}}\text{\ \
in}\ \Omega _{\delta },  \label{41}
\end{equation}%
provided $C>1$ sufficiently large, for $i=1,2$. Then, gathering (\ref{35}), (%
\ref{sub-super2}), (\ref{42}), (\ref{40}) and (\ref{41}) together, we obtain%
\begin{equation*}
-\Delta _{p_{i}}\underline{u}_{i}=C^{-(p_{i}-1)}\left\{ 
\begin{array}{ll}
d(x)^{\alpha _{i}+\beta _{i}} & \text{in }\Omega \backslash \overline{\Omega 
}_{\delta } \\ 
-1 & \text{in }\Omega _{\delta }%
\end{array}%
\right. \leq w_{1}^{\alpha _{i}}w_{2}^{\beta _{i}}-(C^{-1}\mathrm{\hat{M}}%
)^{\eta _{i}}\text{ \ in\ }\Omega ,
\end{equation*}%
and%
\begin{equation*}
-\Delta _{p_{i}}\overline{u}_{i}=C^{p_{i}-1}d(x)^{\alpha _{i}+\beta
_{i}}\geq w_{1}^{\alpha _{i}}w_{2}^{\beta _{i}}\text{ \ in\ }\Omega ,
\end{equation*}%
for $(w_{1},w_{2})\in \lbrack \underline{u}_{1},\overline{u}_{1}]\times
\lbrack \underline{u}_{2},\overline{u}_{2}],i=1,2$. This shows (\ref{55})
and (\ref{55*}).
\end{proof}

\begin{theorem}
\label{T4}Assume that (\ref{alpha1}) and (\ref{gamma2}) are fulfilled. Then,
if $r_{i}\geq p_{i}^{\prime },$ system $(\mathrm{P}_{2})$ admits a
(positive) solution $(u_{1},u_{2})\in W_{0}^{1,p_{1}}(\Omega )\times
W_{0}^{1,p_{2}}(\Omega ),$ within $\left[ \underline{u}_{1},\overline{u}_{1}%
\right] \times \left[ \underline{u}_{2},\overline{u}_{2}\right] .$ Moreover,
if $r_{i}>N,$ this solution $(u_{1},u_{2})$ belongs to $\mathcal{C}%
_{0}^{1,\sigma }(\overline{\Omega })\times \mathcal{C}_{0}^{1,\sigma }(%
\overline{\Omega }),$ for certain $\sigma \in (0,1)$.
\end{theorem}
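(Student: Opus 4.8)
The plan is to verify the hypotheses of Theorems \ref{T1} and \ref{T2} for the absorption system $(\mathrm{P}_{2})$, exactly mirroring the structure of the proof of Theorem \ref{T3}. The two ingredients needed are: first, that the pair $(\underline{u}_{1},\underline{u}_{2})=C^{-1}(z_{1,\delta},z_{2,\delta})$ and $(\overline{u}_{1},\overline{u}_{2})=C(z_{1},z_{2})$ from (\ref{sub-super2}) constitute a genuine sub-supersolution pair in the sense of (\ref{sub})--(\ref{sup}) for the nonlinearities $f_{i}(x,s_{1},s_{2},\xi_{1},\xi_{2})=s_{1}^{\alpha_{i}}s_{2}^{\beta_{i}}-|\xi_{i}|^{\eta_{i}}$; and second, that assumption $(\mathrm{H})$ holds with exponents satisfying (\ref{5*}) (when $r_{i}\geq p_{i}'$) or (\ref{5}) (when $r_{i}>N$).

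First I would establish the sub-supersolution inequalities. For the supersolution, since the absorption term $-|\nabla\overline{u}_{i}|^{\eta_{i}}$ is nonpositive, the inequality (\ref{55*}) of Proposition \ref{P2} immediately gives
\begin{equation*}
-\Delta_{p_{i}}\overline{u}_{i}\geq w_{1}^{\alpha_{i}}w_{2}^{\beta_{i}}\geq w_{1}^{\alpha_{i}}w_{2}^{\beta_{i}}-|\nabla\overline{u}_{i}|^{\eta_{i}}\text{ in }\Omega,
\end{equation*}
so (\ref{sup}) holds for any $(w_{1},w_{2})\in[\underline{u}_{1},\overline{u}_{1}]\times[\underline{u}_{2},\overline{u}_{2}]$. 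For the subsolution the key point is the gradient bound (\ref{36*}): since $\|\nabla z_{i,\delta}\|_{\infty}\leq\hat{\mathrm{M}}$, we have $|\nabla\underline{u}_{i}|^{\eta_{i}}=C^{-\eta_{i}}|\nabla z_{i,\delta}|^{\eta_{i}}\leq(C^{-1}\hat{\mathrm{M}})^{\eta_{i}}$, and then (\ref{55}) of Proposition \ref{P2} yields
\begin{equation*}
-\Delta_{p_{i}}\underline{u}_{i}\leq w_{1}^{\alpha_{i}}w_{2}^{\beta_{i}}-(C^{-1}\hat{\mathrm{M}})^{\eta_{i}}\leq w_{1}^{\alpha_{i}}w_{2}^{\beta_{i}}-|\nabla\underline{u}_{i}|^{\eta_{i}}\text{ in }\Omega,
\end{equation*}
confirming (\ref{sub}).

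Next I would check $(\mathrm{H})$. On the rectangle $[\underline{u}_{1},\overline{u}_{1}]\times[\underline{u}_{2},\overline{u}_{2}]$ the reaction term is controlled by the upper bound in (\ref{45}), which combined with (\ref{36}) gives $|s_{1}^{\alpha_{i}}s_{2}^{\beta_{i}}|\leq\tilde{c}_{2}C^{|\alpha_{i}|+|\beta_{i}|}d(x)^{\alpha_{i}+\beta_{i}}$, so the singular term is of the form $M_{i}d(x)^{\mu_{i}}$ with $\mu_{i}=\alpha_{i}+\beta_{i}$, while the gradient term contributes $|\xi_{i}|^{\eta_{i}}$. Thus $(\mathrm{H})$ holds with $\mu_{i}=\alpha_{i}+\beta_{i}$ and $\gamma_{i}=\theta_{i}=\eta_{i}$. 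The bound (\ref{alpha1}) gives $\mu_{i}=\alpha_{i}+\beta_{i}>-1/r_{i}$, matching the singularity condition in both (\ref{6*}) and (\ref{5}); and (\ref{gamma2}) gives $\eta_{i}<(p_{i}-1)/r_{i}$ together with $\eta_{i}\geq|\alpha_{i}|+|\beta_{i}|$, which delivers the required growth restrictions on $\gamma_{i},\theta_{i}$ (with $r_{i}>N$ directly giving (\ref{5}), and $r_{i}\geq p_{i}'\geq(p_{i}^{\ast})'/p_{i}'$ arranged so that (\ref{5*}) holds).

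The main obstacle I anticipate is bookkeeping rather than conceptual: I must carefully translate the single exponent $\eta_{i}$ into the four separate conditions of (\ref{5*}), verifying that $\eta_{i}\leq p_{i}/(p_{i}^{\ast})'$ and the cross-conditions $\eta_{2}\leq p_{1}/p_{2}'$, $\eta_{1}\leq p_{2}/p_{1}'$ all follow from (\ref{gamma2}) and the constraint $r_{i}\geq(p_{i}^{\ast})'/p_{i}'$. Once the hypotheses are matched, the conclusion is immediate: depending on whether $r_{i}\geq p_{i}'$ or $r_{i}>N$, Theorem \ref{T1} or Theorem \ref{T2} applies and yields a solution $(u_{1},u_{2})$ in the rectangle $[\underline{u}_{1},\overline{u}_{1}]\times[\underline{u}_{2},\overline{u}_{2}]$, lying in $W_{0}^{1,p_{1}}(\Omega)\times W_{0}^{1,p_{2}}(\Omega)$ or $\mathcal{C}_{0}^{1,\sigma}(\overline{\Omega})\times\mathcal{C}_{0}^{1,\sigma}(\overline{\Omega})$ respectively, with $C^{-1}z_{i,\delta}\leq u_{i}\leq Cz_{i}$ ensuring positivity in $\Omega$. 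This completes the proof.
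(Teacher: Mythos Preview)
Your proposal is correct and follows essentially the same route as the paper: verify (\ref{sub})--(\ref{sup}) via Proposition~\ref{P2} and the gradient bound (\ref{36*}), then invoke Theorems~\ref{T1} and~\ref{T2}. One small simplification: since $f_{i}$ involves only $|\xi_{i}|^{\eta_{i}}$ and not the other gradient, you may take $\theta_{1}=\gamma_{2}=0$ in $(\mathrm{H})$, so the cross-conditions in (\ref{5*}) that you flag as the ``main obstacle'' are automatically satisfied and no further bookkeeping is needed.
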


\begin{proof}
We shall prove that functions $(\underline{u}_{1},\underline{u}_{2})$ and $(%
\overline{u}_{1},\overline{u}_{2})$ in (\ref{sub-super}) satisfy (\ref{sub})
and (\ref{sup}), respectively. With this aim, let $(w_{1},w_{2})\in
W_{0}^{1,p_{1}}(\Omega )\times W_{0}^{1,p_{2}}(\Omega )$ such that 
\begin{equation*}
\underline{u}_{i}\leq w_{i}\leq \overline{u}_{i},\text{ }i=1,2.
\end{equation*}%
By Proposition \ref{P2}, it is readily seen that%
\begin{equation*}
-\Delta _{p_{i}}\overline{u}_{i}\geq w_{1}^{\alpha _{i}}w_{2}^{\beta
_{i}}\geq w_{1}^{\alpha _{i}}w_{2}^{\beta _{i}}-|\nabla \overline{u}%
_{i}|^{\eta _{i}}\text{ \ in\ }\Omega ,\text{ }i=1,2,
\end{equation*}%
and therefore, $(\overline{u}_{1},\overline{u}_{2}):=C(z_{1},z_{2})$
fulfills (\ref{sup}).

On account of (\ref{gamma2}), (\ref{36*}) and (\ref{sub-super}), we have 
\begin{equation}
|\nabla \underline{u}_{i}|^{\eta _{i}}=C^{-\eta _{i}}|\nabla z_{i,\delta
}|^{\eta _{i}}\leq C^{-\eta _{i}}||\nabla z_{i,\delta }||_{\infty }^{\eta
_{i}}\leq C^{-\eta _{i}}\mathrm{\hat{M}}^{\eta _{i}}\text{ \ in }\Omega .
\end{equation}%
Again by Proposition \ref{P2}, we obtain%
\begin{equation*}
-\Delta _{p_{i}}\underline{u}_{i}\leq w_{1}^{\alpha _{i}}w_{2}^{\beta
_{i}}-|\nabla \underline{u}_{i}|^{\eta _{i}}\text{ \ in\ }\Omega ,
\end{equation*}%
showing that (\ref{sub}) is fulfilled for $(\underline{u}_{1},\underline{u}%
_{2})=C^{-1}(z_{1,\delta },z_{2,\delta })$.

Consequently, depending on whether $r_{i}\geq p_{i}^{\prime }$ or $r_{i}>N$
in (\ref{alpha1}) and (\ref{gamma2}), Theorems \ref{T1} and \ref{T2} provide
a solution $(u_{1},u_{2})$ in $W_{0}^{1,p_{1}}(\Omega )\times
W_{0}^{1,p_{2}}(\Omega )$ and in $\mathcal{C}_{0}^{1,\sigma }(\overline{%
\Omega })\times \mathcal{C}_{0}^{1,\sigma }(\overline{\Omega }),$ for
certain $\sigma \in (0,1),$ respectively, for problem $(\mathrm{P}_{2})$
such that $C^{-1}z_{i,\delta }\leq u_{i}\leq Cz_{i},$ $i=1,2.$ This
completes the proof.
\end{proof}

\bigskip 

\textbf{Funding.} This research received no specific grant from any funding
agency in the public, commercial, or not-for-profit sectors.

\bigskip

\textbf{Conflict of Interest Statement.} The author has no competing interests to declare that are relevant to the content of this article.

\bigskip

\textbf{Data Availability Statement.} No datasets were generated or analyzed
during the current study.

\end{document}